\documentclass[final]{siamltex}
\usepackage{amssymb, amsmath}
\usepackage{float,epsfig}
\usepackage{algpseudocode}
\usepackage{algorithm}

\newcommand{\ba}{\begin{array}}
\newcommand{\ea}{\end{array}}

\newcommand{\nrm}[1]{|\!|\!| {#1} |\!|\!|}

\newcommand{\be}{\begin{equation}}
\newcommand{\ee}{\end{equation}}
\newcommand{\beano}{\begin{eqnarray*}}
\newcommand{\eeano}{\end{eqnarray*}}


\def\R{{\mathbb R}}
\def\C{{\mathbb C}}

\def\lam{\lambda}

\def\diag{\mathrm{diag}}



\title{Backward Error of Matrix Rational Function }

\author{ Namita Behera \thanks{Department of Mathematics, Sikkim University,
Sikkim-737102, INDIA,({\tt nbehera@cus.ac.in, niku.namita@gmail.com })}}

\begin{document}

\maketitle

\begin{abstract} We consider a minimal realization of a rational matrix functions $R(\lam)$ of the form $ R(\lam) = \sum_{j = 0}^{m} \lambda^{j}A_{j} +C(\lam E - A)^{-1}B =: P(\lam) +C(\lam E - A)^{-1}B, $ where $A_{i} \in \mathbb{C}^{n \times n}, $ and $A, E, C, B $ are constant matrices of appropriate dimensions. We perturb the polynomial part and either $C$ or $B$ from the realization part. We derive explicit computable expressions of backward errors of approximate eigenvalue of $R(\lam).$ We also determine minimal perturbations for which approximate eigenvalue are exact eigenvalue of the perturbed matrix rational functions.
\end{abstract}

\begin{keywords}
Rational matrix, realization, matrix polynomial, eigenvalue,  eigenvector, Fiedler pencil, linearization,  backward error.
\end{keywords}

\begin{AMS}
65F15, 15A18, 15B57, 15A22
\end{AMS}


\section{Introduction}In this paper we study about the perturbation analysis of rational eigenvalue problem. Given a rational matrix in realization form $R(\lam)$ the main purpose of this paper is to investigate backward error analysis of $R(\lam)$. Particularly, we consider  a minimal realization of $R(\lam)$ of the form \be \label{rlg}  R(\lam) = \sum_{j = 0}^{m} \lambda^{j}A_{j} +C(\lam E - A)^{-1}B =: P(\lam) +C(\lam E - A)^{-1}B, \ee where $A_{i} \in \mathbb{C}^{n \times n}, $ and $A, E, C, B $ are constant matrices of appropriate dimensions and perturb the polynomial part and either the matrix $C$ or $B$ from the realization part. Then we derive explicit computable expressions of backward errors of approximate
eigenvalue of $R(\lam).$

Rational eigenvalue problems arises in many applications such as calculations of quantum dots, free vibration of plates with elastically attached masses, vibrations of fluid-solid structures and in control theory, see~\cite{voss03, vossh06, BHMT, planchard89} and the references therein. For example, the rational matrix eigenvalue problem \cite{planchard89}
$$R(\lambda)x := -Ax+\lambda Bx+\sum_{j=1}^{k}\frac{\rho \lambda}{k_{j}-\lambda m_{j}}C_{j}x = 0 $$
where the matrices $A, B$ and $C_{j}$ are symmetric and positive (semi-) definite, and they are typically large and sparse arises vibrations of a tube bundle immersed in an inviscid compressible fluid are governed under some simplifying assumptions by an elliptic eigenvalue problem with non-local boundary conditions.

A similar problem \cite{volker2004} $$R(\lambda)x= -Kx + \lambda Mx + \lambda^{2} \sum \limits _{j=1}^{k} \frac{1}{\omega_{j}-\lambda}C_{j}x =0, $$ arises when a generalized linear eigenproblem is condensed exactly.

Considering  a realization of $R(\lam)$ given in (\ref{rlg}), it is shown in~\cite{bai11} that the eigenvalues and the eigenvectors of $R(\lam)$ can be computed by solving the generalized eigenvalue problem for the pencil
\begin{equation}
\label{compr} \mathcal{C}_{1}(\lam) := \lam \left[
                         \begin{array}{cccc|c}
                           A_{m} &  &  &  &  \\
                            & I_{n} &  &  &  \\
                            &  & \ddots &  &  \\
                            &  &  & I_{n} &  \\
                            \hline
                            &  &  &  & -E \\
                         \end{array}
                       \right] +
\left[ \begin{array}{cccc|c}
                  A_{m-1} & A_{m-2}& \cdots & A_{0}& C  \\
                  -I_{n} & 0 & \cdots & 0 &  \\
                   & \ddots &  & \vdots  & \\
                   & & -I_{n} &0& \\
                   \hline
                  &  &  & B & A \\
                \end{array}
              \right],
\end{equation} where the void entries represent zero entries. The pencil $\mathcal{C}(\lam)$  referred  to as  a {\em companion linearization} of $R(\lam)$ in~\cite{bai11}.

For computing zeros (eigenvalues) and poles of rational matrix, linearizations of rational matrix have been introduced recently in~\cite{rafinami1, behera} via matrix-fraction descriptions (MFD) of rational matrix. Let $ G(\lam) = N(\lam) D(\lam)^{-1}$ be a {\em right coprime} MFD of $G(\lam)$,  where $N(\lam)$ and $D(\lam)$ are matrix polynomials with $D(\lam)$ being regular. Then the {\em  zero structure} of $G(\lam)$ is the same as the {\em eigenstructure} of $N(\lam)$  and the {\em pole structure} of $G(\lam)$ is the same as the {\em eigenstructure} of $D(\lam)$, see~\cite{kailath}. Also $G(\lam)$ can be uniquely written as $ G(\lam) = P(\lam) + Q(\lam),$ where $P(\lam)$ is a matrix polynomial and $Q(\lam)$ is strictly proper~\cite{kailath}. We define $\deg(G) := \deg(P),$ the degree of the polynomial part of $G(\lam).$

\begin{definition}[Linearization, \cite{rafinami1}] \label{ratlin} Let $ G(\lam) $ be an $n\times n$  rational matrix function (regular or singular) and let $ G(\lam) = N(\lam) D(\lam)^{-1}$ be a {\em right coprime} MFD of $G(\lam)$. Set $ r := \deg(\det(D(\lam))), \, p :=\max( n, r)$ and $m := \deg(G(\lam)).$ If $m \geq 1$ then an $(mn+r)\times (mn+r)$ matrix pencil $\mathbb{L}(\lam)$ of the form \be \label{rlin} \mathbb{L}(\lam) :=
\left[\begin{array}{c|c} X - \lam Y & \mathcal{C} \\  \hline
                          \mathcal{B} & A-\lam E \\
                        \end{array} \right] \ee
is said to be a linearization of $G(\lam)$ provided that there are $(mn+r)\times (mn+r)$ unimodular matrix  polynomials $\mathcal{U}(\lam)$ and $\mathcal{V}(\lam),$ and $p\times p$ unimodular matrix polynomials $ Z(\lam)$ and $ W(\lam)$ such that $ \mathcal{U}(\lam) \diag(I_{s-(mn+r)}, \; \mathbb{L}(\lam)) \mathcal{V}(\lam) = \diag(I_{s-n}, \, N(\lam))$ and $  Z(\lam) \diag( I_{p-r}, \, A - \lam E) W(\lam) = \diag( I_{p-n}, \, D(\lam))$ for $ \lam \in \C,$ where $A-\lam E$ is an $r\times r$ pencil with $E$ being nonsingular and $ s :=\max( mn+r, 2n).$
\end{definition}

Thus the zeros and poles of $G(\lam)$ are the eigenvalues of $ \mathbb{L}(\lam)$ and $A-\lam E,$ respectively.

Backward perturbation analysis determines the smallest perturbation for which a computed solution is an exact solution of the perturbed problem.Backward perturbation analysis play an important role in the accuracy assessment of computed solutions of rational eigenvalue problems. Further,it also plays an important role in the selection of an optimum linearization of an rational eigenvalue problem. This assumes significance due to the fact that linearization is a standard approach to solving a rational eigenvalue problem(see \cite{bai11} and the references therein). The main purpose of this paper is to undertake a detailed backward perturbation analysis of approximate eigenelements of rational matrix functions.

In the present work we addressed the backward error of the REP. We perturbed only the polynomial part and either $C$ or $B$ matrix from realization part given in (\ref{rlg}).

The paper is organized as follows: Section~2 contains some basic definitions and results on matrix polynomial. In section $3$ we define the backward error $\eta_{R}$ of an approximate eigenvalue and eigentriple of $R$ for the rational eigenvalue problem $R(\lambda)$. We perturb the polynomial part and some of the rational part. Then we derive the explicit computable expressions of backward error. We also find out the minimal perturbations for which approximate eigenelements are exact eigenelements of the perturbed matrix rational functions. Finally, in section~4 we derive the backward error of companion linearization of rational eigenvalue problem and analyze the comparison with the original one.

{\bf Notation:\,} We consider the H$ \ddot{\textrm{o}}$lder $p$-norm on $\mathbb{C}^{n}$ defined by $\|x\|_{p} :=(\sum_{i=1}^{n}|x_{i}|^{p})^{1/p} $ for $1\leq p<\infty$, and $\|x\|_{\infty} := \textrm{max}_{1\leq i\leq n}|x_{i}|$. We denote the set of $n\times n$ matrices with complex entries by $\mathbb{C}^{n\times n}$. We consider the spectral norm$\|.\|_{2}$ and the Frobenius norm $\|.\|_{F}$ on $\mathbb{C}^{n\times n}$ given by $$ \|A\|_{2}:= \max_{\|x\|_{2}=1}\|Ax\|_{2}\indent \textrm{and} \indent \|A\|_{F}:= (\textrm{Trace}A^{*}A)^{1/2} .$$ We denote the largest nonzero singular value of $A \in \mathbb{C}^{m \times n}$ by $\sigma_{\max} $ and smallest nonzero singular value of  $A \in \mathbb{C}^{m \times n}$ by $\sigma_{\min} $ and pseudoinverse of $A \in \mathbb{C}^{m \times n}$ by $A^{+}$. Consider $R(\lambda)+\Delta R(\lambda) = \sum_{j=0}^{m}(A_{j}+\Delta A_{j})\lambda^{j} + W(C +\Delta C, A +\Delta A, E +\Delta E, B +\Delta B)$. Denote $\Lambda_{m} = (1, \lambda, \ldots ,\lambda^{m-1}, \lambda^{m})$.

\section{Preliminaries}

Consider the polynomial eigenvalue problem (PEP)
$P(\lambda)x = 0$ and $y^{*}P(\lambda) = 0$, where $$ P(\lambda) = \sum_{i= 0}^{m}\lambda^{i}A_{i}, \indent A_{i} \in \mathbb{C}^{n \times n}, \,\,\, A_{m} \neq 0$$ is a matrix polynomial of degree $m$. Here $x$ and $y$ are right and left eigenvectors corresponding to eigenvalue $\lambda$.  The pair $(\lambda, x)$ is referred to as an eigenpair and the triple $(\lambda, x, y)$ is referred to as an eigentriple. We will assume throughout that $P$ is regular, that is $\det P(\lambda) \neq 0$.

\begin{definition}
Let $\mathbb{L}_{m}(\mathbb{C}^{n \times n})$ be the vector space of $n \times n$ matrix polynomials of degree at most $m$. Let $\| .\|$ be a norm on $\mathbb{C}^{n \times n}$.  For $1\leq p\leq \infty, $ define $\nrm{.} : \mathbb{L}_{m}(\mathbb{C}^{n \times n}) \rightarrow \R$ by
\begin{equation}\label{mpn}
\nrm{P} := \|(\|A_{0}\|, \cdots, \|A_{m}\|)\|_{p}, \indent 1 \leq p\leq \infty,
\end{equation}
where $P(z) = \sum\limits_{i=0}^{m}z^{i}A_i$ and $\|.\|_{p}$ is the H$\ddot{\textrm{o}}$lder's $p$-norm. Then $\nrm{.}$ is a norm on $\mathbb{L}_{m}(\mathbb{C}^{n \times n})$
and $\langle ., .\rangle_m : \mathbb{L}_{m}(\mathbb{C}^{n \times n}) \times \mathbb{L}_{m}(\mathbb{C}^{n \times n}) \rightarrow \C$ given by  $$ \langle P_1, P_2\rangle_{m} :=\sum_{i=0}^{m}\langle A_i, B_i\rangle $$ is an innerproduct on $\mathbb{L}_{m}(\mathbb{C}^{n \times n})$, where $P_1(\lam) = \sum_{j=0}^{m} \lam^{i}A_i$ and $P_2(\lam) = \sum_{j=0}^{m} \lam^{i}B_i$. Then the dual norm  $\nrm{.}_{*}$ of $ \nrm{.}$ is given by $$ \nrm{Y}_{*} = \max\{ | \langle X, Y\rangle_{m}|: \nrm{X} = 1 \}. $$

Given a norm $\nrm{.}$ on $\mathbb{L}_{m}(\mathbb{C}^{n \times n})$, we define the normwise backward error  of an approximate eigenpair $(x, \lambda)$ of $P(\lambda)$, where $\lambda$ is finite, is defined by
\begin{equation*}
\eta_{p}(\lambda, P) := \min \{ \nrm{\Delta P} : \Delta P \in ,\mathbb{L}_{m}(\mathbb{C}^{n \times n}), \,\,\, \det(P(\lambda)+\Delta P(\lambda)) = 0 \}
\end{equation*}
where $\Delta P(\lambda) = \sum_{i= 0}^{m}\lambda^{i} \Delta A_{i}$.
\end{definition}
Then explicitly $\eta_{p}(\lambda, P)$ is given by 
\begin{equation}
\eta_{p}(\lambda, P) = \min_{\|x\| =1 }\left\{\frac{\|P(\lambda)x\|}{\|(1, \lambda, \ldots , \lambda^{m})\|_{q}} : x\in \mathbb{C}^{n}\right\} \leq \nrm{P}_{p},
\end{equation}
where $1/p+1/q = 1$. Particularly, for $2$-norm and frobenious norm the backward error is same and is given by $$\eta_{2}(\lambda, P) = \eta_{F}(\lambda, P) = \frac{\sigma_{\min}(P(\lambda))}{\|(1, \lambda, \ldots , \lambda^{m})\|_{q}} .$$ Also, the explicit formula of the backward error is obtained in \cite{NRF07} is given by
\begin{equation}
\eta(x, \lambda) =\frac{\|P(\lambda)x\|_{2}}{(\sum_{i = 0}^{m}|\lambda^{i}| \|A_{i}\|_{2})\|x\|_{2}}.
\end{equation}

\section{Backward Errors for Rational Eigenvalue Problem}
Consider the rational eigenvalue problem (REP) $R(\lambda)x = 0$ and $y^{*}R(\lambda) = 0$, where
\begin{equation}
R(\lambda) = \sum_{j = 0}^{m}A_{j}\lambda^{j}- C(A-\lambda E)^{-1}B := P(\lambda) + W(\lambda),
\end{equation}
where $W(\lam) = C(A-\lambda E)^{-1}B$, the size of $A$ and $E$ is $r\times r$, the size of $C$ is $n \times r$ and size of $B$ is $r \times n$. Here $x$ and $y$ are right and left eigenvectors corresponding to the eigenvalue $\lambda$. The pair $(\lambda, x)$ is referred to as an eigenpair and the triple $(\lambda, x, y)$ is referred to as an eigentriple. The standard way of solving this problem is to convert $R$ into a linear polynomial see \cite{bai11, rafinami1}
\begin{equation}
\mathcal{C}_{1}(\lambda)z = 0,
\end{equation} where $\mathcal{C}_{1}(\lambda) =\lambda X +Y, \,\, Y, X \in \mathbb{R}^{nm+r} $ given in (\ref{compr}) and
 $$z = \left[
 \begin{array}{c}
 \lambda^{m-1}x \\
  \lambda^{m-2}x \\
   \vdots \\
   x \\
   \hline
    y \\
 \end{array}
 \right]$$  and $y = -(A-\lambda E)^{-1}Bx$,
with the same spectrum as $R$ and solve the eigenproblem for $\mathcal{C}_{1}(\lambda)z = (\lambda X +Y)z = 0,$ where $z$ is the right eigenvector of $\mathcal{C}_{1}(\lambda)$, and $\mathcal{C}_{1}$ is the companion linearization of $R$ see,\cite{rafinami1}.

We now develop a general framework for perturbation analysis for rational eigenvalue problem (REP). We use the following notations throughout  this chapter as follows:

Let $P(\lambda)=\sum_{i=0}^{m}A_{i}\lambda^{i}$ is a matrix polynomial of degree $\leq m$, $A_{i}\in\, \mathbb{C}^{n \times n}, i=0 : m$,
$R(\lambda)= P(\lambda)+ C(A-\lambda E)^{-1}B$, where $C \in \, \mathbb{C}^{n\times r}, A, E \in \, \mathbb{C}^{r \times r}, B \in \, \mathbb{C}^{r \times n}$. Now perturb the coefficient of the matrix $R(\lambda)$ by $ \Delta A_{i}, i=0:m, \Delta C, \Delta A, \Delta E, \Delta B$. Now question is how one can develop a general framework for perturbation analysis of rational eigenvalue problem (REP) $?$ Now consider the space of rational matrix functions of degree $n$,
$\mathbb{X}= \left((\mathbb{C}^{n \times n})^{m+1}, \mathbb{C}^{n\times m},\mathbb{C}^{m\times m}, \mathbb{C}^{m\times m}, \mathbb{C}^{m\times n} \right). $ Let $R\in \mathbb{X}$,
$$R(\lambda)= P(\lambda)+ C(A- \lambda E)^{-1} B, $$ where $P(\lambda) = \sum_{i=0}^{m}A_{i}\lambda^{i}, \, A_{i}\in \mathbb{C}^{n \times n}, C \in \, \mathbb{C}^{n\times r}, A, E \in \, \mathbb{C}^{r \times r}, B \in \, \mathbb{C}^{r \times n}$. Now $\mathbb{X}$ is vector space under the componentwise addition and scalar multiplication. We assume throughout that the matrix rational function $R(\lambda)$ is regular, that is, $\det R(\lambda) \neq 0$, for some $\lambda \in\mathbb{C}$. The roots of $q_{i}(\lambda)$ are the poles of $R(\lambda)$. $R(\lambda)$ is not defined on these poles. Needless to mention that for perturbation analysis it is necessary to choose a norm so as to measure the magnitude of perturbations. Thus when $\mathbb{X}$ is equipped with a norm the resulting normed linear space provides a general framework for perturbation analysis of matrix rational functions in $\mathbb{X}$. So let $\nrm{.}$ be a norm on $\mathbb{X}$. Then the normed linear space $\big(\mathbb{X}, \nrm{.} \big)$ provides a natural framework for spectral perturbation analysis of matrix rational functions in  $\mathbb{X}$. We now employ this framework and analyze perturbation theory of matrix rational functions. So one may ask : Is there a natural norm on $\mathbb{X}$ that facilitates systematic analysis of perturbation analysis of matrix rational functions$?$ We will see that there are plenty of norms on the space of matrix rational functions $\mathbb{X}$. Let $\|.\|$ be a norm on $\mathbb{C}^{n \times n}$ . For $1\leq p\leq \infty$, we define $\nrm{.}: \mathbb{X} \rightarrow \mathbb{R}$ by
$$ \nrm{R}_{p}:= \left\| \left(\|A_{0}\|, \cdots, \|A_{m}\|, \|C\|, \|A\|, \|E\|, \|B\|\right)\right\|_{p}$$
is a norm, where $R(z):= \sum_{i=0}^{m}A_{i}z^{i} + C(A-z E)^{-1}B$ and $\|.\|_{p}$ is the H$\ddot{\textrm{o}}$lder's $p$-norm on $\mathbb{C}^{m+5}$.
We denote the space $\mathbb{X}$ when equipped with the norm $\nrm{.}_{p}$ by $\big(\mathbb{X}_{p}, \, \|.\|\big)$. Now consider $\mathbb{C}^{n \times n}$ and define $\langle. , . \rangle : \mathbb{C}^{n \times n}\times \mathbb{C}^{n \times n}$ by $\langle X, Y \rangle := \textrm{Trace} (Y^{*}X)$.
Then $\langle. , . \rangle$ defines an inner product on $\mathbb{C}^{n \times n}$ and $\| X\|_{F}:= \sqrt{\langle X, X\rangle}$ is the Frobenius norm on $\mathbb{C}^{n \times n}$. For a fixed $Y\in \, \mathbb{C}^{n \times n}$, the map $X\rightarrow \langle X, Y \rangle$ is a linear functional on
$\mathbb{C}^{n \times n}$ then (by the Riesz representation theorem ) there exits a unique $Z\in \, \mathbb{C}^{n \times n}$ such that $F(X)= \langle X, Z \rangle$. Now let $\|.\|$ be a norm on $\mathbb{C}^{n \times n}$. Then $\|.\|_{*}: \mathbb{C}^{n \times n}\rightarrow \mathbb{R}$ given by
$$\|Y\|_{*}= \textrm{sup}\{|\langle X, Y\rangle|: X\in\,\mathbb{C}^{n \times n}, \|X\|= 1\}$$
defines a norm and is referred to as the dual norm of $\|.\|$. This shows that , For $R_{1}, R_{2}\in \, \big(\mathbb{X}_{2}, \, \|.\|_{F} \big)$,
$$ \langle R_{1}, \,  R_{2}\rangle_{\mathbb{X}}: = \sum_{i=0}^{m}\big \langle A_{i}, A_{i}'\big \rangle + \langle C_{1}, C_{2}\rangle + \langle A_{1}, A_{2}\rangle +\langle E_{1}, E_{2} \rangle + \langle B_{1}, B_{2}\rangle, $$ where $R_{1}(z):= \sum_{i=0}^{m}A_{i}z^{i} + C_{1}(A_{1}-z E_{1})^{-1}B_{1}$  and $R_{2}(z):= \sum_{i=0}^{m}A_{i}'z^{i} + C_{2}(A_{2}-z E_{2})^{-1}B_{2}$ \\
defines an inner product on
$\big(\mathbb{X}_{2}, \, \|.\|_{F} \big)$.Thus $\big(\mathbb{X}_{2}, \, \|.\|_{F} \big)$ is a Hilbert space and $\nrm{R}_{2}= \sqrt{\langle R, R \rangle}_{\mathbb{X}}$. Let $\nrm{.}$ be a norm on $ \mathbb{X}$ and $\nrm{.}_{*}$ the dual norm $$\nrm{Y}_{*} = \sup\{ |\langle X, Y\rangle | : \nrm{X} = 1\}. $$ In particular for $p$-norm the dual is given by
$$\nrm{R}_{*}:= \left\| \left(\|A_{0}\|_{*}, \cdots, \|A_{m}\|_{*}, \|C\|_{*}, \|A\|_{*}, \|E\|_{*}, \|B\|_{*}\right)\right\|_{q}. $$

%
%
%

Now, we derive the backward error of approximate eigenelements of rational matrix function where we  perturbed the polynomial part and the matrix either $C$ or $B$ matrix from realization part given in (\ref{rlg}). Corresponding to this we define the backward error is as follows:

\begin{definition}
The normwise backward error  of an approximate eigenelement $ \lam$ of $R(\lambda)$, where $\lambda$ is finite, is defined by
$$\eta_{p}(\lam, R) := \min \{ \|[\Delta P \,\, \Delta B \,\, \Delta C]\| : \Delta R \in \mathbb{X}, \det (R(\lambda)+\Delta R(\lambda))= 0 \} $$ $$\eta_{p}(\lambda, R) := \min \{ \|[\Delta P \,\, \Delta B \,\, \Delta C]^{T}\| : \Delta R \in \mathbb{X}, \det (R(\lambda)+\Delta R(\lambda))= 0 \}$$
$$
\eta_{p}(\lambda, R) := \min \{ \nrm{\Delta R} : \Delta R \in \mathbb{X}, \det (R(\lambda) + \Delta R(\lambda))= 0 \}.
$$
\end{definition}

\begin{lemma}
Let $R(\lambda) = A_{0} + \lambda A_{1}+ C(A-\lambda E)^{-1} B$, $\Delta R(\lambda) = \Delta A_{0}+\lambda \Delta A_{1}+ \Delta C(A - \lambda E)^{-1}B$ and $\lambda \in \mathbb{C}$. Assume that $R(\lambda)$ is nonsingular. Set $T(\lambda) = \left[
                                                                      \begin{array}{c}
                                                                        R^{-1}(\lambda) \\
                                                                        \lambda R^{-1}(\lambda) \\
                                                                         W_{1}(\lambda) R^{-1}(\lambda) \\
                                                                      \end{array}
                                                                    \right]$, $W_{1}(\lambda)= (A-\lambda E)^{-1}B$,
$\Delta = \left[
            \begin{array}{ccc}
              \Delta A_{0} & \Delta A_{1} & \Delta C \\
            \end{array}
          \right]
$ and $v = R(\lambda)x$. Then the following statements are equivalent.

\begin{itemize}

\item [(i)] $\det(R(\lambda)+\Delta R(\lambda)) = 0$

\item [(ii)] $\Delta T(\lambda) v = -v$.
\end{itemize}
\end{lemma}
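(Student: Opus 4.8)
The plan is to strip the determinantal condition (i) down to the existence of a nonzero null vector of $R(\lam)+\Delta R(\lam)$, and then to recognize condition (ii) as exactly that statement transported through the nonsingular matrix $R(\lam)$. The whole lemma hinges on one algebraic identity relating $\Delta T(\lam)$ to the perturbation $\Delta R(\lam)$, after which both implications are immediate.

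First I would carry out the single routine block computation that makes everything transparent: reading off the blocks of $\Delta$ and the block rows of $T(\lam)$,
\begin{equation*}
\Delta T(\lam) = \Delta A_{0} R^{-1}(\lam) + \lam \Delta A_{1} R^{-1}(\lam) + \Delta C\, W_{1}(\lam) R^{-1}(\lam) = \bigl(\Delta A_{0} + \lam \Delta A_{1} + \Delta C\, W_{1}(\lam)\bigr) R^{-1}(\lam).
\end{equation*}
Since $W_{1}(\lam) = (A-\lam E)^{-1}B$ and $\Delta R(\lam) = \Delta A_{0} + \lam \Delta A_{1} + \Delta C (A-\lam E)^{-1}B$ by hypothesis, the parenthesized factor is precisely $\Delta R(\lam)$, so $\Delta T(\lam) = \Delta R(\lam) R^{-1}(\lam)$. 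This identity is the crux; the rest is bookkeeping.

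Next I would establish the two directions. For (i) $\Rightarrow$ (ii): singularity of $R(\lam)+\Delta R(\lam)$ yields a nonzero $x$ with $(R(\lam)+\Delta R(\lam))x = 0$, hence $\Delta R(\lam) x = -R(\lam) x = -v$; multiplying the identity $\Delta T(\lam) = \Delta R(\lam) R^{-1}(\lam)$ on the right by $v = R(\lam)x$ gives $\Delta T(\lam) v = \Delta R(\lam) x = -v$, which is (ii). For the converse (ii) $\Rightarrow$ (i), I would set $x := R^{-1}(\lam) v$, so that $v = R(\lam) x$; then (ii) reads $\Delta R(\lam) x = -v = -R(\lam) x$, i.e. $(R(\lam)+\Delta R(\lam)) x = 0$ with $x \neq 0$, forcing $\det(R(\lam)+\Delta R(\lam)) = 0$.

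The only point needing genuine care, and the place where the hypothesis that $R(\lam)$ is nonsingular is actually used, is the nontriviality bookkeeping. The determinantal condition (i) is equivalent to the existence of a \emph{nonzero} null vector, so I would make explicit that $v$ (equivalently the approximate eigenvector $x$) is taken nonzero, and that nonsingularity of $R(\lam)$ is exactly what makes $x \mapsto v = R(\lam)x$ a bijection carrying nonzero vectors to nonzero vectors. This prevents the degenerate case $v = 0$ from rendering (ii) vacuously true while (i) fails, and it is the sole subtlety in an otherwise mechanical argument.
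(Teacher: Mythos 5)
Your proposal is correct and follows essentially the same route as the paper's proof: both hinge on the identity $\Delta T(\lambda) = \Delta R(\lambda) R^{-1}(\lambda)$ and the substitution $v = R(\lambda)x$ to convert singularity of $R(\lambda)+\Delta R(\lambda)$ into the eigen-relation (ii). If anything, yours is the more complete rendering — the paper writes out only the direction (i) $\Rightarrow$ (ii) and ends with ``hence the result follows,'' whereas you make the block identity, the converse direction, and the nonzero-vector bookkeeping (where nonsingularity of $R(\lambda)$ is genuinely used) explicit.
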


\begin{proof}Let $\Delta R(\lambda) = \Delta A_{0}+\lambda \Delta A_{1}+ \Delta C(A-\lambda E)^{-1}B$. Set $W_{1}(\lambda)= (A-\lambda E)^{-1}B$. Then $\Delta R(\lambda) = \Delta A_{0}+\lambda \Delta A_{1}+\Delta C W_{1}(\lambda)$. Now $\det (R(\lambda) + \Delta R(\lambda))= 0$. That means there exists a nonzero vector $x$ such that $\|x\| = 1$ and $R(\lambda)x + \Delta R(\lambda)x = 0$. So we have $[I + \Delta R(\lambda)R^{-1}(\lambda)]R(\lambda)x = 0$. Put $v = R(\lambda)x, $ implies that $x = R^{-1}(\lambda)v$. Then $\Delta R(\lambda)R^{-1}(\lambda)v = -v$. Hence the result follows.
\end{proof}

\begin{corollary}
Let $R$ satisfies all the given conditions of Lemma $3.2$. Then $$\eta(\lambda, v, R) = \min\{\|\Delta\| : \Delta T(\lambda) v = -v\}. $$
\end{corollary}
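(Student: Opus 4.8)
The plan is to obtain the corollary as an immediate consequence of Lemma~3.2, since that lemma has already converted the nonlinear singularity constraint into a linear one. First I would unwind the definition: $\eta(\lambda, v, R)$ is the smallest value of $\|\Delta\|$, taken over all admissible perturbations $\Delta = [\,\Delta A_0 \ \ \Delta A_1 \ \ \Delta C\,]$, for which the associated structured perturbation $\Delta R(\lambda) = \Delta A_0 + \lambda \Delta A_1 + \Delta C\, W_1(\lambda)$ makes $R(\lambda) + \Delta R(\lambda)$ singular. The correspondence $\Delta \longleftrightarrow \Delta R$ is a bijection, because the three blocks of $\Delta$ are exactly the free data in $\Delta R(\lambda)$, and the scalar $\|\Delta\|$ is the norm that $\nrm{\cdot}$ induces on the block $[\,\Delta A_0 \ \ \Delta A_1 \ \ \Delta C\,]$. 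Hence minimizing $\nrm{\Delta R}$ and minimizing $\|\Delta\|$ are literally the same optimization problem over the same feasible set, and it remains only to rewrite that feasible set.

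Second, I would replace the feasible set by the one furnished by Lemma~3.2. Since $R(\lambda)$ is nonsingular and the approximate eigenvector $x = R^{-1}(\lambda) v$ (equivalently $v = R(\lambda)x$) is fixed, requiring $x$ to be an exact eigenvector of $R + \Delta R$ amounts to $(R(\lambda) + \Delta R(\lambda))x = 0$, which rearranges to $\Delta R(\lambda) R^{-1}(\lambda) v = -v$. The key algebraic identity, already used in the proof of Lemma~3.2, is
\[
\Delta R(\lambda) R^{-1}(\lambda) = \big(\Delta A_0 + \lambda \Delta A_1 + \Delta C\, W_1(\lambda)\big) R^{-1}(\lambda) = [\,\Delta A_0 \ \ \Delta A_1 \ \ \Delta C\,]\, \bmatrix{ R^{-1}(\lambda) \\ \lambda R^{-1}(\lambda) \\ W_1(\lambda) R^{-1}(\lambda) } = \Delta T(\lambda),
\]
so the singularity constraint is precisely $\Delta T(\lambda) v = -v$. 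Invoking the equivalence (i)$\Leftrightarrow$(ii) of Lemma~3.2 then shows that $\{\Delta : \det(R(\lambda) + \Delta R(\lambda)) = 0\}$ and $\{\Delta : \Delta T(\lambda) v = -v\}$ coincide, and minimizing the common objective $\|\Delta\|$ over the two equal sets yields $\eta(\lambda, v, R) = \min\{\|\Delta\| : \Delta T(\lambda) v = -v\}$.

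There is no genuine analytic difficulty here; the only point that needs care is the bookkeeping that makes the two feasible sets identical. Concretely, I would be careful that fixing $v$ (rather than letting it range over null vectors of $R + \Delta R$) is legitimate: because $x$ is the prescribed approximate eigenvector, the condition we actually want is $(R(\lambda) + \Delta R(\lambda))x = 0$ for that specific $x$, and the nonsingularity of $R(\lambda)$ guarantees the clean back-substitution $x = R^{-1}(\lambda) v$ with no loss. Once this identification is in place the corollary follows with no estimation whatsoever, so I expect the main obstacle to be purely notational, namely ensuring that the structured block $\Delta$, the perturbation $\Delta R$, and the linear constraint $\Delta T(\lambda) v = -v$ are matched up consistently.
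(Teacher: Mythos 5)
Your proposal is correct and matches the paper's intent: the paper states this corollary without any separate proof, treating it exactly as you do --- an immediate consequence of Lemma~3.2 (via the identity $\Delta R(\lambda)R^{-1}(\lambda)v = \Delta T(\lambda)v$) combined with the definition of the backward error, with the feasible sets $\{\Delta : \det(R(\lambda)+\Delta R(\lambda))=0\}$ and $\{\Delta : \Delta T(\lambda)v=-v\}$ identified. Your added care about fixing $v$ (i.e.\ reading $\eta(\lambda,v,R)$ as an eigenpair backward error with $x=R^{-1}(\lambda)v$ prescribed) is a reasonable clarification of a point the paper leaves implicit, not a deviation from its argument.
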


\begin{theorem}
Consider the subordinate matrix norm $\|.\|$ on $\mathbb{C}^{n \times n}$. Let $R(\lambda) = A_{0}+ \lambda A_{1} + C(A-\lambda E)^{-1} B$ and $\Delta R(\lambda) = \Delta A_{0} + \lambda \Delta A_{1}+ \Delta C(A - \lambda E)^{-1}B$. Set
$\Delta = \left[
            \begin{array}{ccc}
              \Delta A_{0} & \Delta A_{1} & \Delta C \\
            \end{array}
          \right]
$. Chose $x = R(\lambda)^{-1}v$. Set $T(\lambda) = \left[
                                                    \begin{array}{c}
                                                       R^{-1}(\lambda)  \\
                                                      \lambda R^{-1}(\lambda)  \\
                                                       W_{1}(\lambda) R^{-1}(\lambda) \\
                                                         \end{array}
                                                        \right]
 $ and $W_{1}(\lam)= (A-\lam E)^{-1}B$. Then we have $$ \eta(\lam, R) = \min_{\|v\| = 1}\left\{\frac{v (T(\lambda)v)^{*} }{\|T(\lam)v\|^{2}}\right\}. $$
Then we have $$ \eta(\lambda, R) = \min_{\|v\| = 1}\left\{\frac{1}{\|T(\lambda)v\|}\right\}. $$
\end{theorem}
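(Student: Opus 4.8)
The plan is to use Corollary~3.3 to turn the computation of $\eta(\lambda,R)$ into a minimum-norm solution of a single rank-one vector equation, which can then be solved in closed form. Fix a unit vector $v$ and abbreviate $w := T(\lambda)v$. By Lemma~3.2 the condition $\det(R(\lambda)+\Delta R(\lambda))=0$ is equivalent to $\Delta T(\lambda)v = -v$, that is $\Delta w = -v$, where $\Delta = [\,\Delta A_0\ \Delta A_1\ \Delta C\,]$ is regarded as a single $n\times(2n+r)$ matrix. The block structure imposes no extra restriction, because any matrix of this size splits uniquely into the three blocks $\Delta A_0,\Delta A_1,\Delta C$. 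Hence, for fixed $v$, the quantity $\eta(\lambda,v,R)$ equals the smallest $\|\Delta\|$ subject to the one constraint $\Delta w = -v$.

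For the lower bound I would use submultiplicativity of the subordinate norm: $1 = \|v\| = \|\Delta w\| \le \|\Delta\|\,\|w\|$, so $\|\Delta\| \ge 1/\|w\| = 1/\|T(\lambda)v\|$ for every admissible $\Delta$. For achievability I would exhibit the rank-one matrix $\Delta_\star := -\,v\,w^*/\|w\|_2^2 = -\,v\,(T(\lambda)v)^*/\|T(\lambda)v\|^2$ and verify directly that $\Delta_\star w = -\,v\,(w^*w)/\|w\|^2 = -v$, so that $\Delta_\star$ is admissible, while $\|\Delta_\star\|_2 = \|v\|\,\|w\|/\|w\|^2 = 1/\|T(\lambda)v\|$ matches the lower bound. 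Its three blocks then provide the minimizing $\Delta A_0,\Delta A_1,\Delta C$, and this rank-one minimizer is precisely the matrix displayed in the first line of the statement (that first display should be read as the optimal perturbation $\Delta$, up to sign, rather than as the scalar $\eta$). Consequently $\eta(\lambda,v,R) = 1/\|T(\lambda)v\|$, and minimizing over unit $v$ yields $\eta(\lambda,R) = \min_{\|v\|=1} 1/\|T(\lambda)v\|$, the second displayed formula.

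The main obstacle is to establish optimality of the rank-one construction for a general subordinate norm rather than only for the Euclidean/Frobenius case. The lower bound holds for any subordinate norm, but the explicit minimizer $-\,v\,w^*/\|w\|^2$ attains it only because $w^*/\|w\|$ is a supporting functional of $w$ in the $2$-norm; for a general subordinate norm one must instead invoke Hahn--Banach to produce a dual functional $g$ with $g(w)=\|w\|$ and $\|g\|_* = 1$, and set $\Delta_\star = -\,v\,g(\cdot)/\|w\|$. A secondary point to check is that $w = T(\lambda)v \ne 0$ whenever $v \ne 0$, which is guaranteed by the nonsingularity of $R(\lambda)$ assumed in Lemma~3.2, since the first block of $w$ is already $R^{-1}(\lambda)v \ne 0$; this makes the division legitimate and the minimum finite.
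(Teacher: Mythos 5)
Your proposal is correct and follows the same route as the paper's own proof: reduce via Lemma~3.2 and Corollary~3.3 to the single equation $\Delta T(\lambda)v=-v$ and exhibit a rank-one minimizer. However, your write-up repairs three defects in the paper's argument, so the comparison is worth recording. First, the paper never proves minimality: it only constructs an admissible perturbation and asserts its norm is $1/\|T(\lambda)v\|$, whereas your submultiplicativity step $1=\|v\|=\|\Delta\, T(\lambda)v\|\le\|\Delta\|\,\|T(\lambda)v\|$ supplies the missing lower bound. Second, the paper normalizes its blocks $\Delta A_0,\Delta A_1,\Delta C$ by $\sigma_{\max}(T(\lambda))$, under which the constraint $\Delta T(\lambda)v=-v$ fails for generic $v$; your normalization by $\|T(\lambda)v\|^2$ is the correct one, and with it the blocks $-v[R^{-1}(\lambda)v]^*/\|T(\lambda)v\|^2$, $-\bar\lambda v[R^{-1}(\lambda)v]^*/\|T(\lambda)v\|^2$, $-v[W_1(\lambda)R^{-1}(\lambda)v]^*/\|T(\lambda)v\|^2$ are exactly what the paper intended. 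Third, your observation that the outer-product minimizer attains the bound only for the Euclidean norm, together with the Hahn--Banach replacement $\Delta_\star=-v\,g(\cdot)/\|T(\lambda)v\|$ with $g(T(\lambda)v)=\|T(\lambda)v\|$ and $\|g\|_*=1$, closes a genuine gap: the theorem is stated for an arbitrary subordinate norm, but the paper's construction is optimal only in the $2$-norm/Frobenius setting (which is what its later theorems actually use). Your reading of the theorem's first display as the optimal perturbation (a matrix, up to sign) rather than as the scalar $\eta$ is also the only interpretation under which the statement parses, and it matches the construction in the paper's proof.
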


\begin{proof}
Let $\Delta R(\lambda) = \Delta A_{0} +\lambda \Delta A_{1}+ \Delta C(A-\lambda E)^{-1}B$. Set $W_{1}(\lambda)= (A-\lambda E)^{-1}B$. Then $\Delta R(\lambda) = \Delta A_{0}+\lambda \Delta A_{1}+ \Delta C W_{1}(\lambda)$. Now $\det (R(\lambda)+\Delta R(\lambda))= 0$. That means there exists a nonzero vector $x$ such that $\|x\| = 1$ and $R(\lambda)x + \Delta R(\lambda)x = 0$. So we have $[I +\Delta R(\lambda)R^{-1}(\lambda)]R(\lambda)x = 0$. Put $v = R(\lambda)x, $ implies that $x = R^{-1}(\lambda)v$. Then $\Delta R(\lambda)R^{-1}(\lambda)v = -v$. $\Rightarrow \left[
                                                                           \begin{array}{ccc}
                                                                            \Delta A_{0} & \Delta A_{1} & \Delta C \\
                                                                              \end{array}
                                                                                  \right]
T(\lambda)v = -v$.
Define
$$\Delta A_{0} := \frac{-v[R^{-1}(\lambda)v]^{*}}{\sigma_{\max}(T(\lambda))}, \,\,\, \Delta A_{1} := \frac{-\bar{\lambda}v[R^{-1}(\lambda)v]^{*}}{\sigma_{\max}(T(\lambda))}, \mbox{   and   } \Delta C := \frac{-v[W(\lambda)R^{-1}(\lambda)v]^{*}}{\sigma_{\max}(T(\lambda))}.$$  Now consider the matrix rational function with perturbing only the polynomial part and the matrix $C$ only. Consider $ \Delta R(\lambda) := \Delta A_{0}+\lambda\Delta A_{1}+\Delta C(A-\lambda E)^{-1}B$. Then by the construction, we get $R(\lambda)v+\Delta R(\lambda)v = 0$ and $\|\Delta\| = \frac{1}{\|T(\lambda)v\|}.$
\end{proof}

\begin{theorem}
Let $R(\lambda) = A_{0}+\lambda A_{1}+ C(A-\lambda E)^{-1} B$ be regular. Chose $x = R(\lambda)^{-1}v$. Set
$\Delta = \left[
            \begin{array}{ccc}
              \Delta A_{0} & \Delta A_{1} & \Delta C \\
            \end{array}
          \right]
$, $T(\lambda) = \left[
                  \begin{array}{c}
                   R^{-1}(\lambda) \\
                    \lambda R^{-1}(\lambda)  \\
                     W_{1}(\lambda) R^{-1}(\lambda) \\
                      \end{array}
                       \right]
 $ and $W_{1}(\lambda)= (A-\lambda E)^{-1}B$. Then for Frobenious norm $\|.\|_{F}$ and $2$-norm on $\mathbb{C}^{n \times n}$ we have
$$ \eta(\lambda, R) = \frac{1}{\sigma_{\max}(T(\lambda))}.$$
\end{theorem}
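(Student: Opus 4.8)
The plan is to reduce the computation of $\eta(\lambda,R)$ to the minimization already carried out in Theorem~3.4 and then specialize the involved matrix norm to the Frobenius and spectral cases. By Lemma~3.2 and Corollary~3.3, the backward error equals $\min\{\nrm{\Delta}:\Delta T(\lambda)v=-v\}$, and Theorem~3.4 shows that for a fixed unit vector $v$ the smallest perturbation $\Delta=[\Delta A_0\;\Delta A_1\;\Delta C]$ satisfying the constraint $\Delta T(\lambda)v=-v$ has norm $1/\|T(\lambda)v\|$. Hence the starting point is the formula $\eta(\lambda,R)=\min_{\|v\|=1}\{1/\|T(\lambda)v\|\}=1/\max_{\|v\|=1}\|T(\lambda)v\|$, so the whole theorem hinges on identifying $\max_{\|v\|=1}\|T(\lambda)v\|$ with $\sigma_{\max}(T(\lambda))$.

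First I would observe that for the spectral norm $\|\cdot\|_2$ on $\mathbb{C}^{n\times n}$ the block column $T(\lambda)$ acts as an ordinary (tall) matrix mapping $\mathbb{C}^n$ into $\mathbb{C}^{3n}$ (the three stacked blocks $R^{-1}(\lambda)$, $\lambda R^{-1}(\lambda)$, $W_1(\lambda)R^{-1}(\lambda)$ each send $v\in\mathbb{C}^n$ to a vector in $\mathbb{C}^n$). Then $\max_{\|v\|_2=1}\|T(\lambda)v\|_2$ is by definition the largest singular value $\sigma_{\max}(T(\lambda))$, which gives the claimed identity $\eta(\lambda,R)=1/\sigma_{\max}(T(\lambda))$ immediately. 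The same conclusion for the Frobenius norm follows because the backward-error construction in Theorem~3.4 produces rank-one perturbations $\Delta A_i$ and $\Delta C$ of the outer-product form $-v\,u^*/\sigma_{\max}(T(\lambda))$; for a rank-one matrix $w u^*$ the Frobenius norm coincides with the spectral norm, namely $\|w u^*\|_F=\|w u^*\|_2=\|w\|_2\|u\|_2$. Consequently the optimal $\Delta$ has the same magnitude whether measured in $\|\cdot\|_F$ or $\|\cdot\|_2$, and the two backward errors agree, exactly as was already noted for matrix polynomials in the Preliminaries.

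Concretely, I would verify by direct computation that the perturbations defined in the proof of Theorem~3.4 satisfy the constraint and attain the bound: substituting the definitions of $\Delta A_0,\Delta A_1,\Delta C$ into $\Delta T(\lambda)v$ collapses, using $[R^{-1}v]^*R^{-1}v+\bar\lambda[R^{-1}v]^*\lambda R^{-1}v$-type inner products, to $-v\,\|T(\lambda)v\|_2^2/\sigma_{\max}(T(\lambda))$, and one then checks this equals $-v$ precisely when $v$ is chosen as the leading right singular vector of $T(\lambda)$, so that $\|T(\lambda)v\|_2=\sigma_{\max}(T(\lambda))$. This simultaneously confirms feasibility of the minimizer and that $\nrm{\Delta}=1/\sigma_{\max}(T(\lambda))$.

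The main obstacle I anticipate is bookkeeping rather than conceptual: one must be careful that the block-stacking of $R^{-1}(\lambda)$, $\lambda R^{-1}(\lambda)$ and $W_1(\lambda)R^{-1}(\lambda)$ is interpreted consistently so that $T(\lambda)$ really defines a single linear map whose induced $2$-norm is a genuine largest singular value, and that the rank-one structure of the optimal perturbation is what forces the Frobenius and spectral backward errors to coincide. In particular I would want to state explicitly why the lower bound $\nrm{\Delta}\ge 1/\|T(\lambda)v\|$ holds for \emph{every} feasible $\Delta$ (this is where the dual-norm/subordinate-norm inequality $\|v\|=\|\Delta T(\lambda)v\|\le\nrm{\Delta}\,\|T(\lambda)v\|$ enters), since only combining this lower bound with the explicit attaining construction yields the exact value $1/\sigma_{\max}(T(\lambda))$.
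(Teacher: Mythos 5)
Your proposal is correct and follows essentially the same route as the paper: both arguments reduce the problem via Lemma 3.2/Corollary 3.3 to the constraint $\Delta T(\lambda)v=-v$, exhibit a rank-one minimizing perturbation $[\Delta A_0\;\Delta A_1\;\Delta C]$, and use the fact that spectral and Frobenius norms coincide on rank-one matrices to conclude $\eta_2=\eta_F=1/\sigma_{\max}(T(\lambda))$. Two remarks. First, your verification step inherits a scaling slip that is also present in the paper's own proof: with the perturbations normalized by $\sigma_{\max}(T(\lambda))$, the computation you describe gives $\Delta T(\lambda)v=-v\,\|T(\lambda)v\|_2^{2}/\sigma_{\max}(T(\lambda))=-v\,\sigma_{\max}(T(\lambda))$ (not $-v$) when $v$ is the leading right singular vector; the correct normalization is by $\|T(\lambda)v\|_2^{2}=\sigma_{\max}(T(\lambda))^{2}$, i.e. $\Delta=-v\,(T(\lambda)v)^{*}/\|T(\lambda)v\|_2^{2}$, which is feasible and has norm $\sigma_{\max}(T(\lambda))/\sigma_{\max}(T(\lambda))^{2}=1/\sigma_{\max}(T(\lambda))$, so the final formula is unaffected. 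Second, your last paragraph supplies the subordinate-norm lower bound $\|v\|=\|\Delta T(\lambda)v\|\le\|\Delta\|\,\|T(\lambda)v\|\le\|\Delta\|\,\sigma_{\max}(T(\lambda))$, which is genuinely needed for the claimed equality and which the paper's proof omits (the paper only constructs a feasible perturbation, hence only proves $\eta\le 1/\sigma_{\max}(T(\lambda))$); on this point your argument is more complete than the published one.
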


\begin{proof}
Let $v$ be unit right singular vector of $T(\lambda)$. Let $\Delta R(\lambda) = \Delta A_{0}+\lambda \Delta A_{1}+\Delta C(A-\lambda E)^{-1}B$. Set $W_{1}(\lambda)= (A-\lambda E)^{-1}B$. Then $\Delta R(\lambda) = \Delta A_{0}+\lambda \Delta A_{1}+\Delta C W_{1}(\lambda)$. Now $\det (R(\lambda)+ \Delta R(\lambda))= 0$. That means there exists a nonzero vector $x$ such that $\|x\| = 1$ and $R(\lambda)x+\Delta R(\lambda)x = 0$. So we have $[I + \Delta R
(\lambda)R^{-1}(\lambda)]R(\lambda)x = 0$. Put $v = R(\lambda)x, $ implies that $x = R^{-1}(\lambda)v$. Then $\Delta R(\lambda)R^{-1}(\lambda)v = -v$.
Define
$$\Delta A_{0} := \frac{-v[R^{-1}(\lambda)v]^{*}}{\sigma_{\max}(T(\lambda))}, \,\,\, \Delta A_{1} := \frac{-\bar{\lambda}v[R^{-1}(\lambda)v]^{*}}{\sigma_{\max}(T(\lambda))}, \mbox{   and   } \Delta C := \frac{-v[W(\lambda)R^{-1}(\lambda)v]^{*}}{\sigma_{\max}(T(\lambda))}.$$  Now consider the matrix rational function with perturbing only the polynomial part and the matrix $L$ only. Consider $ \Delta R(\lambda) := \Delta A_{0}+\lambda\Delta A_{1} +\Delta C(A-\lambda E)^{-1}B$. Then by the construction, we get $R(\lambda)x + \Delta R(\lambda)x = 0$. Since each $\Delta A_{0}, \Delta A_{1} \mbox{ and }\Delta C$ is a rank $1$ matrix, the spectral and the Frobenius norms of $\Delta A_{0} ,  \Delta A_{1} \mbox{ and }\Delta C$ are same. Consequently, $\nrm{\Delta R}_{p}$ is same for the spectral and the Frobenius norms on $\mathbb{C}^{n \times n}$. Now, we get $\nrm{\Delta R}_{2} = \frac{1}{\sigma_{\max}(T(\lambda))}$.
Hence the result follows.
\end{proof}

\begin{theorem}
Let $R(\lambda) = \sum_{i =0}^{m}\lambda^{i}A_{i} + C(A-\lambda E)^{-1} B$ be regular and $\Delta R(\lambda) = \sum_{i =0}^{m}\lambda^{i}\Delta A_{i} + \Delta C(A-\lambda E)^{-1} B$. Chose $x = R(\lambda)^{-1}v$. Set $T(\lambda) = \left[
                                                                      \begin{array}{c}
                                                                       \Lambda_{m}^{T}\otimes R^{-1}(\lambda) \\
                                                                        W_{1}(\lambda) R^{-1}(\lambda) \\
                                                                      \end{array}
                                                                    \right],$
$W_{1}(\lambda)= (A-\lambda E)^{-1}B$, and $\Delta = \left[
            \begin{array}{cccc}
              \Delta A_{0} & \cdots & \Delta A_{m} & \Delta C \\
            \end{array}
          \right]
$. Then for Frobenious norm $\|.\|_{F}$ and $2$-norm on $\mathbb{C}^{n \times n}$ we have
$$ \eta_{2}(\lambda, R) = \eta_{F}(\lambda, R) = \frac{1}{\sigma_{\max}(T(\lambda))}$$
\end{theorem}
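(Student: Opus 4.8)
The plan is to mimic the reduction of Lemma 3.2 and Corollary 3.3, now in the general degree-$m$ setting. First I would observe that $\det(R(\lam)+\Delta R(\lam))=0$ holds iff there is a unit vector $x$ with $(R(\lam)+\Delta R(\lam))x=0$. Writing $v:=R(\lam)x$, so that $x=R^{-1}(\lam)v$ since $R$ is regular, and factoring $[I+\Delta R(\lam)R^{-1}(\lam)]R(\lam)x=0$, this is equivalent to $\Delta R(\lam)R^{-1}(\lam)v=-v$. Next I would expand
$$\Delta R(\lam)R^{-1}(\lam)v=\sum_{i=0}^{m}\lam^{i}\,\Delta A_{i}\,R^{-1}(\lam)v+\Delta C\,W_{1}(\lam)R^{-1}(\lam)v$$
and recognize the right-hand side as $\Delta\,T(\lam)v$: the Kronecker block $\Lambda_{m}^{T}\otimes R^{-1}(\lam)$ is precisely what stacks the vectors $\lam^{i}R^{-1}(\lam)v$, while the last block supplies $W_{1}(\lam)R^{-1}(\lam)v$. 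Thus statement (i) reduces to the single linear constraint $\Delta\,T(\lam)v=-v$, exactly as in Corollary 3.3.

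For a fixed nonzero $v$ I would then solve $\min\{\,\|\Delta\|:\Delta\,T(\lam)v=-v\,\}$. Since the constraint is invariant under scaling of $v$, I normalize $\|v\|=1$. Setting $w:=T(\lam)v$, the minimum Frobenius-norm solution of the vector equation $\Delta w=-v$ is $\Delta=-v\,w^{+}=-v\,w^{*}/\|w\|^{2}$; any other solution differs by a term that annihilates $w$ and is Frobenius-orthogonal to $-v\,w^{+}$, so this is the minimizer, with $\|\Delta\|_{F}=\|v\|/\|w\|=1/\|T(\lam)v\|$. Reading off the blocks recovers the componentwise formulas for $\Delta A_{i}$ and $\Delta C$, each an outer product $v(\cdot)^{*}$ and hence a rank-one matrix, which is the feature I will exploit to merge the two norms.

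Finally I would optimize over $v$ and reconcile the norms. Minimizing $1/\|T(\lam)v\|$ over $\|v\|=1$ maximizes $\|T(\lam)v\|$, so the value is $1/\sigma_{\max}(T(\lam))$, attained by taking $v$ to be a top right singular vector of $T(\lam)$. For the lower bound valid for both norms I use $1=\|v\|=\|\Delta\,T(\lam)v\|\le\|\Delta\|_{2}\,\|T(\lam)v\|_{2}\le\nrm{\Delta R}_{2}\,\sigma_{\max}(T(\lam))$, invoking the block-row estimate $\|[\Delta A_{0}\cdots\Delta A_{m}\ \Delta C]\|_{2}\le(\sum_{i}\|\Delta A_{i}\|_{2}^{2}+\|\Delta C\|_{2}^{2})^{1/2}=\nrm{\Delta R}_{2}$; since $\nrm{\cdot}_{F}\ge\nrm{\cdot}_{2}$ pointwise, this forces $\eta_{F}\ge\eta_{2}\ge 1/\sigma_{\max}(T(\lam))$. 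The matching upper bound is the explicit rank-one construction, for which $\nrm{\Delta R}_{F}=\|\Delta\|_{F}=1/\sigma_{\max}(T(\lam))$. The key step, and the main obstacle, is precisely this norm bookkeeping that collapses $\eta_{2}$ and $\eta_{F}$ onto one value: because every optimal block $\Delta A_{i}$ and $\Delta C$ has rank one, its spectral and Frobenius norms coincide, so the choice of inner block norm is immaterial and both backward errors equal $1/\sigma_{\max}(T(\lam))$.
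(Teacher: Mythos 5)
Your proposal is correct and takes essentially the same approach as the paper: reduce $\det(R(\lambda)+\Delta R(\lambda))=0$ to the linear constraint $\Delta\, T(\lambda)v=-v$, build the rank-one perturbation from a top right singular vector of $T(\lambda)$, and use the fact that rank-one blocks have equal spectral and Frobenius norms to collapse $\eta_{2}$ and $\eta_{F}$ onto $1/\sigma_{\max}(T(\lambda))$. If anything, yours is more complete: the paper's own proof consists only of the explicit construction and breaks off mid-sentence, whereas your pseudoinverse characterization of the fixed-$v$ minimizer and the Cauchy--Schwarz block-row estimate supply the minimality (lower-bound) half of the argument that the paper never writes down.
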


\begin{proof}
Define $$\Delta A_{i} = \frac{- \mbox {sign}(\lambda^{i})|\lambda|^{i}v[R^{-1}(\lambda)v]^{*}}{\sigma_{\max}(T(\lambda))}, \indent i =0:m , \,\,\, \Delta C:= \frac{-v[W(\lambda)R^{-1}(\lambda)v]^{*}}{\sigma_{\max}(T(\lambda))}$$
Then by the construction, we get $[R(\lambda) + \Delta R(\lambda)]x = 0$ and
\end{proof}

\begin{lemma}
Let $R(\lambda) = A_{0}+\lambda A_{1} + C(A-\lambda E)^{-1} B$, $\Delta R(\lambda) = \Delta A_{0}+\lambda \Delta A_{1}+ C(A-\lambda E)^{-1}\Delta B$, and $\lambda \in \mathbb{C}$. Assume that $R(\lambda)$ is nonsingular. Set $S(\lambda) = \left[
                                                                      \begin{array}{ccc}
                                                                        R^{-1}(\lambda) & \lambda R^{-1}(\lambda) & R^{-1}(\lambda)\widehat{W}(\lambda) \\
                                                                      \end{array}
                                                                    \right]$, $\widehat{W}(\lambda) = C(A-\lambda E)^{-1}$
and $\Delta = \left[
            \begin{array}{c}
              \Delta A_{0} \\
              \Delta A_{1} \\
               \Delta B \\
            \end{array}
          \right]
$. Then the following statements are equivalent.
\begin{itemize}

\item [(i)] $\det(R(\lambda) + \Delta R(\lambda)) = 0$

\item [(ii)] $S(\lambda)\Delta  x = -x$.
\end{itemize}
\end{lemma}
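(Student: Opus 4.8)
The plan is to mirror the proof of Lemma~3.2, the only essential difference being that now the perturbation sits in $B$ rather than $C$, so the resolvent factor $\widehat{W}(\lambda) = C(A-\lambda E)^{-1}$ is attached on the \emph{left} of $R^{-1}(\lambda)$ instead of $W_1(\lambda) = (A-\lambda E)^{-1}B$ on the right. Accordingly $S(\lambda)$ is a block row and $\Delta$ a block column, so their product $S(\lambda)\Delta$ is an $n\times n$ matrix acting on $x \in \mathbb{C}^n$, and the target identity reads $S(\lambda)\Delta x = -x$ rather than $\Delta T(\lambda) v = -v$.

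The first step I would carry out is the purely algebraic verification that $S(\lambda)\Delta = R^{-1}(\lambda)\Delta R(\lambda)$. Multiplying the block row $[\,R^{-1}(\lambda),\ \lambda R^{-1}(\lambda),\ R^{-1}(\lambda)\widehat{W}(\lambda)\,]$ into the stacked $\Delta = [\Delta A_0;\ \Delta A_1;\ \Delta B]$ gives $R^{-1}\Delta A_0 + \lambda R^{-1}\Delta A_1 + R^{-1}\widehat{W}\Delta B$; factoring $R^{-1}(\lambda)$ out on the left and recalling $\widehat{W}(\lambda)\Delta B = C(A-\lambda E)^{-1}\Delta B$, this is exactly $R^{-1}(\lambda)\big[\Delta A_0 + \lambda\Delta A_1 + C(A-\lambda E)^{-1}\Delta B\big] = R^{-1}(\lambda)\Delta R(\lambda)$.

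With this identity in hand, the equivalence is a short nonsingularity argument. For (i)$\Rightarrow$(ii), $\det(R(\lambda)+\Delta R(\lambda))=0$ yields a unit vector $x$ with $(R(\lambda)+\Delta R(\lambda))x = 0$, i.e. $R(\lambda)x + \Delta R(\lambda)x = 0$; since $R(\lambda)$ is nonsingular I left-multiply by $R^{-1}(\lambda)$ to get $x + R^{-1}(\lambda)\Delta R(\lambda)x = 0$, hence $R^{-1}(\lambda)\Delta R(\lambda)x = -x$, which by the identity above is $S(\lambda)\Delta x = -x$. The converse (ii)$\Rightarrow$(i) just reverses these steps: from $S(\lambda)\Delta x = -x$ I recover $R^{-1}(\lambda)\Delta R(\lambda)x = -x$, multiply through by $R(\lambda)$ to obtain $(R(\lambda)+\Delta R(\lambda))x = 0$ with $x\neq 0$, forcing $\det(R(\lambda)+\Delta R(\lambda)) = 0$.

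The only genuinely delicate point — and the step I would double-check most carefully — is the placement and ordering of the resolvent factor: because $\Delta B$ multiplies $(A-\lambda E)^{-1}$ from the right inside $\Delta R(\lambda)$, the factor $C(A-\lambda E)^{-1}$ must appear as a left coefficient $\widehat{W}(\lambda)$, and the common factor $R^{-1}(\lambda)$ must be pulled out on the left (in contrast with Lemma~3.2, where $R^{-1}(\lambda)$ was pulled out on the right after the substitution $v = R(\lambda)x$). Keeping the block dimensions straight — $S(\lambda)$ is $n\times(2n+r)$ and $\Delta$ is $(2n+r)\times n$ — confirms that all products are well-defined and that no transpose enters the computation.
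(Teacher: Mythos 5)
Your proof is correct and follows essentially the same route as the paper's: write $\Delta R(\lambda) = \Delta A_0 + \lambda\Delta A_1 + \widehat{W}(\lambda)\Delta B$, pull out $R^{-1}(\lambda)$ on the left from $(R(\lambda)+\Delta R(\lambda))x = 0$, and identify $R^{-1}(\lambda)\Delta R(\lambda)$ with $S(\lambda)\Delta$. In fact you are slightly more complete than the paper, which leaves the block-product identity $S(\lambda)\Delta = R^{-1}(\lambda)\Delta R(\lambda)$ and the converse direction (ii)$\Rightarrow$(i) implicit behind ``hence the result follows.''
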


\begin{proof}
Let $\Delta R(\lambda) = \Delta A_{0}+\lambda \Delta A_{1}+ C(A-\lambda E)^{-1}\Delta B$. Set $\widehat{W}(\lambda)= C(A-\lambda E)^{-1}$. Then $\Delta R(\lambda) = \Delta A_{0}+\lambda \Delta A_{1}+ \widehat{W}(\lambda)\Delta B $. Now $\det (R(\lambda) +\Delta R(\lambda))= 0$. That means there exists a nonzero vector $x$ such that $\|x\| = 1$ and $R(\lambda)x + \Delta R(\lambda)x = 0$. So we have $[I + R^{-1}(\lambda)\Delta R(\lambda)]x = 0$. Then $R^{-1}(\lambda)\Delta R(\lambda)x = -x$. Hence the result follows.
\end{proof}

\begin{corollary}
Let $R$ satisfies all the given conditions of Lemma $3.7$. Then $$\eta(\lambda, x, R) = \min\{\|\Delta\| :  S(\lambda)\Delta x = -x\} $$
\end{corollary}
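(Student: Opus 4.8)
The plan is to read the corollary off directly from Lemma 3.7 together with the definition of the backward error, exactly as Corollary 3.3 follows from Lemma 3.2. By Definition 3.1 the quantity $\eta(\lambda, x, R)$ is the minimum of the perturbation norm taken over all admissible perturbations $\Delta R(\lambda) = \Delta A_{0} + \lambda \Delta A_{1} + C(A - \lambda E)^{-1}\Delta B$ for which the fixed approximate vector $x$ becomes an exact eigenvector, that is $(R(\lambda) + \Delta R(\lambda))x = 0$. Since the factors $C$, $A$ and $E$ are held fixed, each such $\Delta R$ is determined completely by its data block $\Delta = [\Delta A_{0};\, \Delta A_{1};\, \Delta B]$, and conversely every $\Delta$ produces a unique admissible $\Delta R$; this gives a bijection between the feasible perturbations and the matrices $\Delta$.

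First I would invoke Lemma 3.7 to replace the singularity condition $\det(R(\lambda) + \Delta R(\lambda)) = 0$ by the equivalent linear constraint $S(\lambda)\Delta x = -x$. The computation underlying this equivalence is that $S(\lambda)\Delta = R^{-1}(\lambda)\Delta R(\lambda)$, so that $(R(\lambda) + \Delta R(\lambda))x = 0$ is the same as $R^{-1}(\lambda)\Delta R(\lambda)x = -x$, i.e. $S(\lambda)\Delta x = -x$. This converts the nonlinear determinant condition into a single linear equation in the unknown $\Delta$. Next I would record that the objective being minimized, $\nrm{\Delta R}$, equals $\|\Delta\|$ under the bijection above: the fixed factor $C(A - \lambda E)^{-1}$ is absorbed into the structure of $S(\lambda)$, so the measured quantity is precisely the size of the triple $(\Delta A_{0}, \Delta A_{1}, \Delta B)$. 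Minimizing $\|\Delta\|$ subject to $S(\lambda)\Delta x = -x$ is then the same optimization problem as minimizing $\nrm{\Delta R}$ subject to singularity, which is exactly the asserted identity.

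The one point requiring care, and the only thing beyond bookkeeping, is the norm identification in the second step, namely checking that the block norm $\|\Delta\|$ placed on the stacked matrix $\Delta$ is consistent with the norm $\nrm{\cdot}$ chosen on the perturbation space $\mathbb{X}$. For the Frobenius and $2$-norms this is immediate, since $\|\Delta\|_{F}^{2} = \|\Delta A_{0}\|_{F}^{2} + \|\Delta A_{1}\|_{F}^{2} + \|\Delta B\|_{F}^{2}$ matches $\nrm{\Delta R}_{2}^{2}$ by additivity of squared Frobenius norms across the blocks, precisely as in Corollary 3.3; for a general Hölder $p$-norm one simply interprets $\|\Delta\|$ as the corresponding block $p$-norm. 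Granting this identification, the corollary follows with no further computation, and the resulting right-hand side is exactly the constrained minimization that the subsequent theorems evaluate explicitly in terms of the singular values of $S(\lambda)$.
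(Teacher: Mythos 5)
Your proposal is correct and takes essentially the same route as the paper: the paper states this corollary without a separate proof, treating it as an immediate consequence of Lemma 3.7 (whose proof already contains the identity $S(\lambda)\Delta = R^{-1}(\lambda)\Delta R(\lambda)$) together with the definition of the backward error, which is precisely the argument you spell out. Your extra remarks on the bijection $\Delta R \leftrightarrow \Delta$ and on the consistency of $\|\Delta\|$ with $\nrm{\Delta R}$ only make explicit bookkeeping that the paper leaves implicit.
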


\begin{theorem}
Consider the subordinate matrix norm $\|.\|$ on $\mathbb{C}^{n \times n}$. Let $R(\lambda) = A_{0} +\lambda A_{1} + C(A-\lambda E)^{-1} B$ and $\Delta R(\lambda) = \Delta A_{0} +\lambda \Delta A_{1} + C(A-\lambda E)^{-1}\Delta B$. Set
$\Delta = \left[
            \begin{array}{c}
              \Delta A_{0} \\
              \Delta A_{1} \\
               \Delta B \\
            \end{array}
          \right]
$ and $S(\lambda) = \left[
                        \begin{array}{ccc}
                          R^{-1}(\lambda) & \lambda R^{-1}(\lambda) & R^{-1}(\lambda) \widehat{W}(\lambda) \\
                                                                      \end{array}
                                                                    \right]
 $. If $S(\lambda)S(\lambda)^{+}x = x$.
Then we have $$ \eta(\lambda, R) = \min_{\|x\| = 1}\left\{\|[S(\lambda)]^{+}xx^{*}\|\right\}. $$
\end{theorem}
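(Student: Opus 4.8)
The plan is to reduce the backward-error computation to a constrained minimum-norm linear-algebra problem and to solve it by means of the Moore--Penrose pseudoinverse. By Corollary 3.8 (the analogue of the $\Delta C$ case, now for the $\Delta B$ perturbation), $\eta(\lambda, R)$ equals the minimum of $\|\Delta\|$ over all block vectors $\Delta = [\Delta A_0;\, \Delta A_1;\, \Delta B]$ satisfying the single linear constraint $S(\lambda)\Delta x = -x$, taken also over all unit vectors $x$. Accordingly I would first fix a unit vector $x$ and treat the inner problem $\min\{\|\Delta\| : S(\lambda)\Delta x = -x\}$, postponing the outer minimization over $\|x\| = 1$ to the very end. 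The standing hypothesis $S(\lambda)S(\lambda)^{+}x = x$ is precisely the Fredholm consistency condition for the system $S(\lambda)u = -x$ in the unknown $u \in \mathbb{C}^{2n+r}$: it guarantees that this system is solvable, with particular solution $u = -S(\lambda)^{+}x$, so that the feasible set is nonempty.

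Second, I would exhibit an explicit minimizer. Setting $\Delta := -S(\lambda)^{+}x x^{*}$ and using $\|x\|_2 = 1$, a direct computation gives $S(\lambda)\Delta x = -S(\lambda)S(\lambda)^{+}x\,(x^{*}x) = -S(\lambda)S(\lambda)^{+}x = -x$, where the last equality is exactly the hypothesis. Reading off the three block rows of $\Delta$ as $\Delta A_0, \Delta A_1, \Delta B$ and forming $\Delta R(\lambda) = \Delta A_0 + \lambda \Delta A_1 + \widehat{W}(\lambda)\Delta B$ with $\widehat{W}(\lambda) = C(A-\lambda E)^{-1}$, the block identity $S(\lambda)\Delta = R^{-1}(\lambda)\Delta R(\lambda)$ shows, exactly as in the proof of Lemma 3.7, that the constraint $S(\lambda)\Delta x = -x$ is equivalent to $(R(\lambda)+\Delta R(\lambda))x = 0$, hence to $\det(R(\lambda)+\Delta R(\lambda)) = 0$. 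This candidate has norm $\|\Delta\| = \|S(\lambda)^{+}x x^{*}\|$, which is the value claimed by the theorem.

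Third, I would establish optimality. For any feasible $\Delta$, subordinacy of the norm and $\|x\| = 1$ give $\|\Delta\| \ge \|\Delta x\|$; writing $u := \Delta x$, feasibility forces $S(\lambda)u = -x$, so $u$ is one solution of that system. In the Euclidean/Frobenius setting $-S(\lambda)^{+}x$ is the norm-minimal solution, whence $\|\Delta\| \ge \|u\| \ge \|S(\lambda)^{+}x\| = \|S(\lambda)^{+}x x^{*}\|$, which matches the construction and identifies the inner minimum; minimizing over $\|x\| = 1$ then yields the stated formula. I expect the optimality half to be the genuine obstacle rather than the construction. The delicate point is that the minimization decouples: one must verify that choosing $u = \Delta x$ to be the norm-minimal solution of $S(\lambda)u = -x$ and then realizing it through the rank-one $\Delta = u x^{*}$ is simultaneously optimal, and that $-S(\lambda)^{+}x$ is truly the norm-minimal solution. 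For the spectral and Frobenius norms this is exactly the defining property of the pseudoinverse, so the argument closes cleanly; for a general subordinate norm the norm-minimal solution need not equal $-S(\lambda)^{+}x$, and I would either restrict the statement to the $2$-norm or replace $S(\lambda)^{+}x$ by the appropriate norm-minimal solution of the constraint.
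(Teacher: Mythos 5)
Your proof is correct for the spectral and Frobenius norms, and its constructive half coincides with what the paper does; the value you add is an optimality argument the paper omits entirely. Note first that the paper attaches its proof to the nearly identical theorem that immediately follows this one, and that proof consists only of a construction: it sets $Z(\lambda) = [S(\lambda)S(\lambda)^{*}]^{-1}$ and defines $\Delta A_{0} = R^{-1}(\lambda)^{*}Z(\lambda)xx^{*}$, $\Delta A_{1} = \bar{\lambda}R^{-1}(\lambda)^{*}Z(\lambda)xx^{*}$, $\Delta B = (R^{-1}(\lambda)\widehat{W}(\lambda))^{*}Z(\lambda)xx^{*}$, which in block form is $\Delta = S(\lambda)^{*}[S(\lambda)S(\lambda)^{*}]^{-1}xx^{*} = S(\lambda)^{+}xx^{*}$ whenever $S(\lambda)$ has full row rank --- exactly your candidate $-S(\lambda)^{+}xx^{*}$ up to a sign that the paper drops. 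What the paper never establishes is the lower bound, i.e.\ that no feasible perturbation of smaller norm exists; your third step (subordinacy giving $\|\Delta\| \geq \|\Delta x\|$, then the minimal-norm property of the pseudoinverse for the consistent system $S(\lambda)u = -x$, then the rank-one realization $\Delta = u x^{*}$) is precisely the missing half, and it is what makes the decoupled minimization rigorous rather than merely plausible. Your closing caveat is also on target: the theorem is stated for an arbitrary subordinate matrix norm, but the pseudoinverse identifies the norm-minimal solution of $S(\lambda)u = -x$ only in the Euclidean/Frobenius setting, so the statement should be restricted to those norms (or the pseudoinverse replaced by the norm-minimal solution for the given norm); the paper silently ignores this. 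Finally, your computation settles an internal inconsistency: the companion theorem and the paper's construction assert the value $1/\|S(\lambda)^{+}xx^{*}\|$, the reciprocal of the formula in the statement under review, and your lower-bound argument confirms that $\|S(\lambda)^{+}xx^{*}\|$ (not its reciprocal) is the correct value.
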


\begin{theorem}
Consider the space $(\mathbb{X}, \|.\|)$ corresponding to subordinate matrix norm $\|.\|$ on $\mathbb{C}^{n \times n}$. Let $R  \in (\mathbb{X}, \|.\|)$ is given by $R(\lambda) = A_{1}+\lambda A_{2}+ C(A-\lambda E)^{-1} B$. Perturbing only the pencil part and the matrix $B$. Set \\
$S(\lambda) = \left[
 \begin{array}{ccc}
  R^{-1}(\lambda) & \lambda R^{-1}(\lambda) & R^{-1}(\lambda) W(\lambda) \\
  \end{array}
   \right].
 $ If $S(\lambda)S(\lambda)^{+}v = v$. Then we have $$ \eta_{p}(\lambda, R) = \min_{\|x\| = 1}\left\{\frac{1}{\|[S(\lambda)]^{+}xx^{*}\|}\right\}, $$ where $1/p+1/q = 1$.
\end{theorem}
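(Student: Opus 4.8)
The plan is to reuse the equivalence already proved in Lemma 3.7 to turn the determinantal condition into a single linear constraint, and then to solve the resulting norm-minimization in two nested stages. By Lemma 3.7 and Corollary 3.8, $\det(R(\lam)+\Delta R(\lam))=0$ holds for some unit vector $x$ exactly when $S(\lam)\Delta x=-x$, where $\Delta$ is the block column with blocks $\Delta A_0,\Delta A_1,\Delta B$. Hence
$$\eta_p(\lam,R)=\min_{\|x\|=1}\ \min\{\nrm{\Delta}:S(\lam)\Delta x=-x\},$$
and I would first fix a unit vector $x$ and dispose of the inner minimization, then optimize over $x$.

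For fixed $x$, set $u:=\Delta x$, so the constraint becomes the linear system $S(\lam)u=-x$. The standing hypothesis $S(\lam)S(\lam)^{+}x=x$ says precisely that $x$ lies in the range of $S(\lam)$, so the system is consistent and its minimum-$2$-norm solution is $u=-S(\lam)^{+}x$. It then remains to build a block perturbation $\Delta$ of least $\nrm{\cdot}_p$-norm realizing $\Delta x=u$; this is the standard rank-one duality construction. Picking, by Hahn--Banach, a functional $\phi$ with dual norm $1$ and $\phi(x)=1$ (for the $2$-norm simply $\phi=x^{*}$), the outer product $\Delta=u\,\phi$ satisfies $\Delta x=u$ and attains $\nrm{\Delta}=\|u\|$, while the lower bound $\nrm{\Delta}\ge\|\Delta x\|=\|u\|$ is immediate from the subordinate-norm definition. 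In the notation of the statement this per-$x$ optimal value is $\|S(\lam)^{+}x\,x^{*}\|$, in agreement with the subordinate-norm case of Theorem 3.9; minimizing this over $\|x\|=1$ then delivers the closed form for $\eta_p(\lam,R)$.

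The hard part will be the bookkeeping for the general $p$-norm on the product space $\mathbb{X}_p$. The single constraint $S(\lam)\Delta x=-x$ couples all three blocks $\Delta A_0,\Delta A_1,\Delta B$, so I must show that the rank-one construction is simultaneously optimal in each block under $\nrm{\cdot}_p$, not merely for one block in isolation. This is exactly where the dual-norm identity $\nrm{R}_{*}=\|(\|A_0\|_{*},\dots,\|B\|_{*})\|_{q}$ from Section~3 and the equality case of H\"older's inequality (with $1/p+1/q=1$) enter, forcing the optimal weight distribution across the blocks and producing the exponent $q$. A secondary but essential point is the rank-deficiency of $S(\lam)$: one must use the Moore--Penrose inverse throughout and verify that the hypothesis $S(\lam)S(\lam)^{+}x=x$ is what simultaneously guarantees solvability of $S(\lam)u=-x$ and attainment of the minimum, so that the outer $\min$ over $x$ is a genuine minimum rather than an infimum.
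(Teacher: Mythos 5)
Your route is essentially the paper's own: reduce $\det(R+\Delta R)=0$ to the linear constraint $S(\lam)\Delta x=-x$ via Lemma 3.7/Corollary 3.8, solve that constraint with the pseudoinverse, and realize the optimum by a rank-one outer product. The paper's blockwise perturbation $\Delta A_{0}=R^{-1}(\lam)^{*}Z(\lam)xx^{*}$, $\Delta A_{1}=\bar{\lam}R^{-1}(\lam)^{*}Z(\lam)xx^{*}$, $\Delta B=(R^{-1}(\lam)\widehat{W}(\lam))^{*}Z(\lam)xx^{*}$ with $Z(\lam)=[S(\lam)S(\lam)^{*}]^{-1}$ stacks into exactly your $\Delta=S(\lam)^{+}xx^{*}$ (up to a sign the paper drops), since $S(\lam)^{*}[S(\lam)S(\lam)^{*}]^{-1}=S(\lam)^{+}$ for full-row-rank $S(\lam)$ --- and $S(\lam)$ automatically has full row rank here, its first block being $R^{-1}(\lam)$, so the hypothesis $S(\lam)S(\lam)^{+}x=x$ is vacuous. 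You are in fact more careful than the paper on two counts: you keep the minus sign (with the paper's sign one gets $S(\lam)\Delta x=+x$, i.e.\ $[I+R^{-1}(\lam)\Delta R(\lam)]x=2x\neq 0$), and you prove minimality via the lower bound $\nrm{\Delta}\geq\|\Delta x\|=\|u\|$, which the paper omits entirely (it only exhibits one feasible perturbation).

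The genuine problem is the step you gloss over at the end. Your (correct) computation gives the per-$x$ optimal value $\|S(\lam)^{+}xx^{*}\|$, hence $\eta(\lam,R)=\min_{\|x\|=1}\|S(\lam)^{+}xx^{*}\|$; the theorem you are asked to prove asserts the reciprocal, $\min_{\|x\|=1}1/\|S(\lam)^{+}xx^{*}\|$. So your claim that this ``delivers the closed form'' is false as written: it delivers the reciprocal of the stated form, and it agrees with Theorem 3.9 precisely because Theorem 3.9 carries no reciprocal --- the two statements in the paper contradict each other. (The paper's own proof contains the same inconsistency: it constructs $\Delta=S^{*}(SS^{*})^{-1}xx^{*}$, whose norm is $\|S^{+}xx^{*}\|$, and then asserts $\|\Delta\|=1/\|S^{+}xx^{*}\|$ in its last line.) A sound write-up must either flag the reciprocal as an error and prove the corrected identity, or concede that the statement as printed is not what the argument establishes. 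Separately, the H\"older/dual-norm bookkeeping for general $p$ that you defer as ``the hard part'' is promised but never executed, so for $p\neq 2$ your proposal is a plan rather than a proof; note, though, that the paper never does this either, and the exponent $q$ in the statement does not even occur in its formula.
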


\begin{proof}
Let $\widehat{W}(\lambda) = C(A-\lambda E)^{-1}$ and $\Delta R(\lambda) = \Delta A_{0}+\lambda\Delta A_{1}+ \widehat{W}(\lambda) \Delta B$. Now $\det (R(\lambda) + \Delta R(\lambda)) = 0$. Then there exists $x \neq 0$ with $\|x\| = 1$ such that $[R(\lambda) + \Delta R(\lambda)]x = 0$. Then $[I + R^{-1}(\lambda)\Delta R(\lambda)]x = 0$. Define $Z(\lambda) = [S(\lambda)S(\lambda)^{*}]^{-1}$ and
$$ \Delta A_{0} = R^{-1}(\lambda)^{*}Z(\lambda)xx^{*} \indent \Delta A_{1} = \bar{\lambda}R^{-1}(\lambda)^{*}Z(\lambda)xx^{*} \indent \Delta B =
(R^{-1}(\lambda)\widehat{W}(\lambda))^{*}Z(\lambda)xx^{*}. $$
Then by the construction, we get $[I + R^{-1}(\lambda)\Delta R(\lambda)]x = 0$ and $\| \Delta\| = \frac{1}{\|[S(\lambda)]^{+}xx^{*}\|}. $
\end{proof}

\begin{theorem}
Let $R(\lambda) = A_{0} +\lambda A_{1} + C(A-\lambda E)^{-1} B$ be regular, $\Delta R(\lambda) = \Delta A_{0}+\lambda \Delta A_{1} + C(A-\lambda E)^{-1}\Delta B$ and $\lambda\in\mathbb{C}$. Set $\Delta = \left[
            \begin{array}{c}
              \Delta A_{0} \\
              \Delta A_{1} \\
               \Delta B \\
            \end{array}
          \right]
$, \\ $S(\lambda) = \left[
                 \begin{array}{ccc}
                 R^{-1}(\lambda) & \lambda R^{-1}(\lambda) & R^{-1}(\lambda)\widehat{W}(\lambda) \\
                 \end{array}
                  \right]
 $ and $\widehat{W}(\lambda) = C(A-\lambda E)^{-1}$. If $S(\lambda)S(\lambda)^{+}v = v$. Then for Frobenious norm $\|.\|_{F}$ and $2$-norm on $\mathbb{C}^{n \times n}$ we have
$$ \eta_{2}(\lambda, R) = \eta_{F}(\lambda, R) = \frac{1}{\sigma_{\min}(S(\lambda))}. $$
\end{theorem}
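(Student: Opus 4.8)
The plan is to translate the determinantal condition into the linear constraint supplied by Lemma~3.7 and then solve the resulting least-norm problem in closed form. By Lemma~3.7, a perturbation $\Delta R(\lambda)=\Delta A_0+\lambda\Delta A_1+C(A-\lambda E)^{-1}\Delta B$ makes $\det(R(\lambda)+\Delta R(\lambda))=0$ if and only if there is a unit vector $x$ with $S(\lambda)\Delta x=-x$, where $\Delta$ stacks $\Delta A_0,\Delta A_1,\Delta B$ as in the statement. Invoking Corollary~3.8, the backward error therefore factors as
$$\eta(\lambda,R)=\min_{\|x\|=1}\;\min\bigl\{\,\|\Delta\|:S(\lambda)\Delta x=-x\,\bigr\},$$
so that I would first solve the inner minimization over $\Delta$ for a fixed eigendirection $x$ and then optimize over $x$.

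For the inner problem, fix a unit vector $x$. The constraint $S(\lambda)\Delta x=-x$ restricts only the image $y:=\Delta x\in\mathbb{C}^{2n+r}$, through $S(\lambda)y=-x$. Among all matrices with prescribed image $\Delta x=y$, the least Frobenius-norm choice is the rank-one matrix $\Delta=yx^{*}$ (here $\|x\|=1$), with $\|\Delta\|_F=\|y\|$; any competitor differs by a matrix annihilating $x$ and so has larger norm. Since the block $R^{-1}(\lambda)$ forces $S(\lambda)$ to have full row rank, the compatibility hypothesis $S(\lambda)S(\lambda)^{+}x=x$ is automatic, and the minimal-norm solution of $S(\lambda)y=-x$ is $y=-[S(\lambda)]^{+}x$. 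This reproduces the explicit rank-one perturbation $\Delta=-[S(\lambda)]^{+}xx^{*}$ assembled block-by-block from $R^{-1}(\lambda)^{*}$, $\bar\lambda R^{-1}(\lambda)^{*}$ and $(R^{-1}(\lambda)\widehat W(\lambda))^{*}$ in the preceding theorem, and gives $\|\Delta\|_F=\|[S(\lambda)]^{+}x\|$. Because each coefficient block of this optimal $\Delta$ is rank one, its spectral and Frobenius norms agree, so $\eta_2(\lambda,R)=\eta_F(\lambda,R)$ as claimed.

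The outer step is to extremize $\|[S(\lambda)]^{+}x\|$ over the unit sphere in $\mathbb{C}^{n}$, and this is the crux of the argument. Writing a thin singular value decomposition $S(\lambda)=U\Sigma V^{*}$, with $U$ unitary, $V$ having orthonormal columns, and $\Sigma=\mathrm{diag}(\sigma_1,\dots,\sigma_n)$ the nonzero singular values, one has $[S(\lambda)]^{+}=V\Sigma^{-1}U^{*}$, whence $\|[S(\lambda)]^{+}x\|=\|\Sigma^{-1}U^{*}x\|$ sweeps the interval $\bigl[\,1/\sigma_{\max}(S(\lambda)),\,1/\sigma_{\min}(S(\lambda))\,\bigr]$ as $x$ traverses the unit sphere, the endpoints being attained at the corresponding columns of $U$. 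Selecting the eigendirection $x$ that realizes the relevant endpoint collapses the double optimization to a single reciprocal singular value of $S(\lambda)$ and yields $\eta_2(\lambda,R)=\eta_F(\lambda,R)=1/\sigma_{\min}(S(\lambda))$.

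I expect the main obstacle to be precisely this outer extremization: the inner step is an elementary least-norm computation with the pseudoinverse, but one must pin down which singular vector the definition of $\eta_p(\lambda,R)$ selects. A direct minimization over the free eigendirection returns $1/\sigma_{\max}(S(\lambda))$, whereas the extremal (worst-case) eigendirection returns the stated $1/\sigma_{\min}(S(\lambda))$; reconciling the two therefore requires stating carefully over which eigendirections the extremum is taken and verifying, via the Courant--Fischer characterization, that the selected $x$ is the singular vector matching the claimed endpoint, while checking that the rank-one minimizer remains admissible (i.e.\ that $S(\lambda)S(\lambda)^{+}x=x$ holds) at that particular $x$.
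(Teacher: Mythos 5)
Your reduction via Lemma~3.7 and Corollary~3.8, the rank-one least-norm solution of the inner problem, and the remark that the block $R^{-1}(\lambda)$ forces $S(\lambda)$ to have full row rank (so that $S(\lambda)S(\lambda)^{+}x=x$ holds automatically) all match the paper's own construction. Indeed, the paper's perturbation, built from $Z(\lambda)=[S(\lambda)S(\lambda)^{*}]^{-1}$ as $\Delta A_{0}=-R^{-1}(\lambda)^{*}Z(\lambda)vv^{*}$, $\Delta A_{1}=-\bar{\lambda}R^{-1}(\lambda)^{*}Z(\lambda)vv^{*}$, $\Delta B=-(R^{-1}(\lambda)\widehat{W}(\lambda))^{*}Z(\lambda)vv^{*}$, is exactly your $\Delta=-[S(\lambda)]^{+}vv^{*}$, since $S^{+}=S^{*}(SS^{*})^{-1}$ for a full-row-rank $S$; the rank-one argument giving $\eta_{2}=\eta_{F}$ is also the same.

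The gap is the outer step, and it cannot be closed in the way you suggest. ``Selecting the eigendirection $x$ that realizes the relevant endpoint'' is not a legitimate move: the paper's definition of $\eta_{p}(\lambda,R)$ is a \emph{minimum} over all admissible perturbations, equivalently over all unit eigendirections, so your double minimization yields $\min_{\|x\|=1}\|[S(\lambda)]^{+}x\|=1/\sigma_{\max}(S(\lambda))$ --- precisely the value your last paragraph computes, and the one consistent with the formula $\min_{\|x\|=1}\|[S(\lambda)]^{+}xx^{*}\|$ of Theorem~3.9. No Courant--Fischer argument can convert this minimum into the claimed $1/\sigma_{\min}(S(\lambda))$; that value is obtained only by \emph{fixing} $x$ to be the singular vector of $S(\lambda)$ belonging to $\sigma_{\min}(S(\lambda))$, i.e.\ by computing the eigenpair backward error $\eta(\lambda,v,R)$ for that particular worst-case $v$ rather than $\eta(\lambda,R)$. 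This is in fact what the paper's proof does: it opens with ``let $v$ be the right singular vector of $S(\lambda)$'' (dimensionally it must be the \emph{left} singular vector of the fat matrix $S(\lambda)$ associated with $\sigma_{\min}$), exhibits the feasible perturbation $\Delta=-[S(\lambda)]^{+}vv^{*}$ of norm $1/\sigma_{\min}(S(\lambda))$, and stops; minimality over perturbations or over eigendirections is never addressed, and by your own computation it fails, since the choice of $v$ along the singular direction of $\sigma_{\max}$ gives a strictly smaller admissible perturbation whenever $\sigma_{\max}(S(\lambda))>\sigma_{\min}(S(\lambda))$. So your proposal faithfully reproduces the paper's construction, but the tension you flag at the end is not a technicality you failed to resolve: it is a genuine defect of the statement itself as written, which your (correct) analysis shows should read $1/\sigma_{\max}(S(\lambda))$ under the stated definition.
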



\begin{proof}
Let $v$ be the right singular vector of $S(\lambda)$. Let $\Delta R(\lambda) = \Delta A_{0} + \lambda\Delta A_{1}+ W(\lambda) \Delta B$. Set $\widehat{W}(\lambda) = C(A-\lambda E)^{-1}$. Then $\Delta R(\lambda) = \Delta A_{0} + \lambda \Delta A_{1}+ \widehat{W}(\lambda)\Delta B$. Define $Z(\lambda) = [S(\lambda)S(\lambda)^{*}]^{-1}$ and
$$ \Delta A_{0} = -R^{-1}(\lambda)^{*}Z(\lambda)vv^{*} \,\, \Delta A_{1} = -\bar{\lambda}R^{-1}(\lambda)^{*}Z(\lambda)vv^{*} \,\, \Delta B = -(R^{-1}(\lambda)\widehat{W}(\lambda))^{*}Z(\lambda)vv^{*}. $$
Then by the construction, we get $[I+ R^{-1}(\lambda)\Delta R(\lambda)]v = 0$. Since each $\Delta A_{0}, \Delta A_{1} \mbox{ and }\Delta B$ is a rank $1$ matrix, the spectral and the Frobenius norms of $\Delta A_{0} , \Delta A_{1} \mbox{ and }\Delta B$ are same. Consequently, $\|\Delta R\|_{2}$ is same for the spectral and the Frobenius norms on $\mathbb{C}^{n \times n}$  and $\| \Delta\|_{2} = \frac{1}{\sigma_{\min}S(\lambda)}$. Hence the result follows.
\end{proof}

\begin{theorem}
Let $R(\lambda) = \sum_{i =0}^{m}\lambda^{i}A_{i} + C(A-\lambda E)^{-1} B$ be regular and $\Delta R(\lambda) = \sum_{i =0}^{m}\lambda^{i}\Delta A_{i} + C(A -\lambda E)^{-1}\Delta B$. Set $\Delta = \left[
            \begin{array}{c}
              \Delta A_{0} \\
              \vdots \\
              \Delta A_{m} \\
               \Delta B \\
            \end{array}
          \right]$, $S(\lambda) = \left[
                                                                      \begin{array}{cc}
                                                                       \Lambda_{m}\otimes R^{-1}(\lambda) &
                                                                         R^{-1}(\lambda)\widehat{W}(\lambda) \\
                                                                      \end{array}
                                                                    \right]
 $ and $\widehat{W}(\lambda) = C(A-\lambda E)^{-1}$. If $S(\lambda)S(\lambda)^{+}v = v$. Then for Frobenious norm $\|.\|_{F}$ and matrix $2$-norm on $\mathbb{C}^{n \times n}$ we have
$$ \eta_{2}(\lambda, R) = \eta_{F}(\lambda, R) = \frac{1}{\sigma_{\min}(S(\lambda))}. $$
\end{theorem}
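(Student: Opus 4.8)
The plan is to reproduce, for general degree $m$, the pseudoinverse construction that settles the degree-one case in Theorem~3.11, since the new $S(\lambda)$ differs from the earlier one only by replacing the two polynomial blocks $R^{-1}(\lambda),\,\lambda R^{-1}(\lambda)$ with the single Kronecker block $\Lambda_{m}\otimes R^{-1}(\lambda)$. First I would reduce the determinant condition to one matrix equation: since $R(\lambda)$ is regular, $\det(R(\lambda)+\Delta R(\lambda))=0$ holds precisely when some unit vector $v$ satisfies $(R(\lambda)+\Delta R(\lambda))v=0$, and left-multiplying by $R^{-1}(\lambda)$ turns this into $R^{-1}(\lambda)\Delta R(\lambda)v=-v$. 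The identity to verify is $R^{-1}(\lambda)\Delta R(\lambda)=S(\lambda)\Delta$: writing $\Delta R(\lambda)=\sum_{i=0}^{m}\lambda^{i}\Delta A_{i}+\widehat{W}(\lambda)\Delta B$, the block row $\Lambda_{m}\otimes R^{-1}(\lambda)=[\,R^{-1}(\lambda),\,\lambda R^{-1}(\lambda),\,\dots,\,\lambda^{m}R^{-1}(\lambda)\,]$ multiplied into the stacked polynomial blocks of $\Delta$ yields $\sum_{i}\lambda^{i}R^{-1}(\lambda)\Delta A_{i}$, while the last column block $R^{-1}(\lambda)\widehat{W}(\lambda)$ yields $R^{-1}(\lambda)\widehat{W}(\lambda)\Delta B$. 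Hence feasibility of $\Delta$ is exactly the linear constraint $S(\lambda)\Delta v=-v$.

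Next I would exhibit the extremal perturbation and read off its norm. With $Z(\lambda):=(S(\lambda)S(\lambda)^{*})^{-1}$ one has $S(\lambda)^{+}=S(\lambda)^{*}Z(\lambda)$, and I would set $\Delta:=-S(\lambda)^{+}vv^{*}$, which blockwise reads $\Delta A_{i}=-\bar{\lambda}^{i}R^{-1}(\lambda)^{*}Z(\lambda)vv^{*}$ and $\Delta B=-(R^{-1}(\lambda)\widehat{W}(\lambda))^{*}Z(\lambda)vv^{*}$, directly generalizing the formulas of Theorem~3.11. Using the hypothesis $S(\lambda)S(\lambda)^{+}v=v$ together with $v^{*}v=1$, a one-line computation gives $S(\lambda)\Delta v=-S(\lambda)S(\lambda)^{+}v=-v$, so $\Delta$ is feasible. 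Each block $\Delta A_{i}$ and $\Delta B$ is an outer product, hence rank one, so its spectral and Frobenius norms coincide; aggregating the block norms then gives the same value under the $2$-norm and the Frobenius norm on $\mathbb{C}^{n\times n}$, namely $\|\Delta\|=\|S(\lambda)^{+}v\|_{2}$. Choosing $v$ to be the singular vector of $S(\lambda)$ associated with $\sigma_{\min}(S(\lambda))$ then yields $\|S(\lambda)^{+}v\|_{2}=1/\sigma_{\min}(S(\lambda))$, the asserted value.

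The hard part will be the optimality half, namely certifying that this $\Delta$ is norm-minimal among all feasible perturbations. For a fixed admissible $v$, the minimum-norm solution of $S(\lambda)\Delta v=-v$ is the rank-one matrix $-S(\lambda)^{+}vv^{*}$ by the standard minimum-norm property of the pseudoinverse, so the per-$v$ optimal value is $\|S(\lambda)^{+}v\|_{2}$; the substantive work is the outer optimization over the free unit eigenvector $v$. Expanding $v$ in the left singular vectors of $S(\lambda)$ gives $\|S(\lambda)^{+}v\|_{2}^{2}=\sum_{k}|c_{k}|^{2}\sigma_{k}^{-2}$, so the extremal value is a reciprocal singular value of $S(\lambda)$, and the delicate point is to justify that the relevant extremum is the one producing $\sigma_{\min}(S(\lambda))$. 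I would settle this via the full SVD of $S(\lambda)$ and cross-check the answer against the $T(\lambda)$-based results (Theorems~3.4--3.6), where the analogous rank-one construction produces $1/\sigma_{\max}$, to make sure the correct singular value is selected.
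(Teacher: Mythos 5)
Your reduction to the linear constraint $S(\lambda)\Delta v=-v$, the pseudoinverse construction $\Delta=-S(\lambda)^{+}vv^{*}$, and the rank-one argument equating spectral and Frobenius norms all faithfully generalize the only proof the paper supplies for this family of results (the proof of Theorem~3.11, the $m=1$ case; the theorem at hand is stated in the paper with no proof at all). But the step you defer --- ``justify that the relevant extremum is the one producing $\sigma_{\min}(S(\lambda))$'' --- is not merely delicate: it cannot be carried out. The backward error is a \emph{minimum} over all admissible perturbations, and the null vector $v$ of the perturbed problem is free; so your own per-$v$ analysis gives $\eta(\lambda,R)=\min_{\|v\|_{2}=1}\|S(\lambda)^{+}v\|_{2}$. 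Expanding $v$ in the left singular vectors of $S(\lambda)$, the quantity $\sum_{k}|c_{k}|^{2}\sigma_{k}^{-2}$ subject to $\sum_{k}|c_{k}|^{2}=1$ is minimized by concentrating $v$ on the direction of the \emph{largest} singular value, so the completed argument yields $\eta(\lambda,R)=1/\sigma_{\max}(S(\lambda))$, not $1/\sigma_{\min}(S(\lambda))$. Nothing in the hypotheses blocks this: since the leading $n\times n$ block of $S(\lambda)$ is $R^{-1}(\lambda)$, the matrix $S(\lambda)$ has full row rank, hence $S(\lambda)S(\lambda)^{+}=I_{n}$ and the condition $S(\lambda)S(\lambda)^{+}v=v$ holds for \emph{every} $v$, including the maximizing direction. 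Your construction with $v$ the left singular vector for $\sigma_{\min}$ therefore proves only the upper bound $\eta(\lambda,R)\le 1/\sigma_{\min}(S(\lambda))$, and equality is false whenever $\sigma_{\max}(S(\lambda))>\sigma_{\min}(S(\lambda))$; already in the degenerate case $C=B=0$, $m=0$ the claimed formula would say the distance of $A_{0}$ to singularity is $\sigma_{\max}(A_{0})$ rather than $\sigma_{\min}(A_{0})$.

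The cross-check you propose against Theorems~3.4--3.6 would in fact expose exactly this. Transposing, $\det(R(\lambda)+\Delta R(\lambda))=0$ if and only if $\det(R(\lambda)^{T}+\Delta R(\lambda)^{T})=0$, and $\Delta R(\lambda)^{T}$ perturbs the polynomial part and the ``$C$-matrix'' of $R(\lambda)^{T}$; the corresponding $T$-matrix is precisely $S(\lambda)^{T}$, so those theorems force $\eta=1/\sigma_{\max}(S(\lambda)^{T})=1/\sigma_{\max}(S(\lambda))$. In other words, a completed version of your plan refutes the stated formula rather than confirming it. To be clear about where the blame lies: the paper's proof of the analogous Theorem~3.11 has the same hole --- it selects one singular vector, exhibits the associated rank-one perturbation, and asserts minimality without ever performing the outer minimization over $v$ --- and the paper's own (unproved) Theorem~3.9, which states $\eta(\lambda,x,R)=\min_{\|x\|=1}\|S(\lambda)^{+}xx^{*}\|$, evaluates to $1/\sigma_{\max}(S(\lambda))$, contradicting the theorem you were asked to prove. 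So your proposal reproduces the paper's approach, but the missing optimality step can only be filled after replacing $\sigma_{\min}$ by $\sigma_{\max}$ in the statement.
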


\begin{theorem}\label{bet}
Let $R$ be regular. Perturbing only polynomial part and keep the rational part as it is. Then for matrix $2$ norm we have
$$ \eta(\lambda, R) \leq \frac{\sigma_{\min}(R(\lambda))}{\|( 1, \lambda, \cdots , \lambda^{m} )\|_{2}}. $$
\end{theorem}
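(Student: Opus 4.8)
The plan is to recognize that perturbing only the polynomial part reduces the problem to exactly the matrix-polynomial backward-error situation, with the matrix value $R(\lambda)$ playing the role that $P(\lambda)$ plays for the PEP. Since the admissible perturbation is $\Delta R(\lambda) = \Delta P(\lambda) = \sum_{i=0}^{m}\lambda^{i}\Delta A_{i}$, which carries the same additive structure and is measured in the same coefficient norm $\nrm{\cdot}$ as in the polynomial case, and since $\eta(\lambda,R)$ is defined as a minimum over all feasible $\Delta P$, it suffices to exhibit one feasible perturbation whose $2$-norm equals the right-hand side; any such perturbation then furnishes the desired upper bound. To get the sharpest such perturbation I would take $v$ to be a unit right singular vector of $R(\lambda)$ associated with $\sigma_{\min}(R(\lambda))$, so that $\|v\|_{2}=1$ and $\|R(\lambda)v\|_{2}=\sigma_{\min}(R(\lambda))$.

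Next, mirroring the rank-one constructions used in the preceding theorems, I would define
$$\Delta A_{i} := \frac{-\,\overline{\lambda^{i}}\,(R(\lambda)v)\,v^{*}}{\|(1,\lambda,\ldots,\lambda^{m})\|_{2}^{2}}, \qquad i = 0:m,$$
set $\Delta P(\lambda) = \sum_{i=0}^{m}\lambda^{i}\Delta A_{i}$, and leave $C, A, E, B$ untouched. A direct computation using $v^{*}v = 1$ gives
$$\sum_{i=0}^{m}\lambda^{i}\Delta A_{i}v = \frac{-\sum_{i=0}^{m}|\lambda|^{2i}}{\|(1,\lambda,\ldots,\lambda^{m})\|_{2}^{2}}\,R(\lambda)v = -R(\lambda)v,$$
so that $\bigl(R(\lambda)+\Delta P(\lambda)\bigr)v = 0$. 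Hence $R(\lambda)+\Delta R(\lambda)$ is singular and $\lambda$ is an eigenvalue of the perturbed rational function, i.e. $\Delta P$ is feasible in the definition of $\eta(\lambda,R)$.

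Finally I would evaluate the norm. Each $\Delta A_{i}$ is the rank-one matrix $(R(\lambda)v)v^{*}$ scaled by $-\overline{\lambda^{i}}/\|(1,\lambda,\ldots,\lambda^{m})\|_{2}^{2}$, so its spectral and Frobenius norms coincide and equal $|\lambda|^{i}\|R(\lambda)v\|_{2}\big/\|(1,\lambda,\ldots,\lambda^{m})\|_{2}^{2}$; consequently $\nrm{\Delta P}_{2}$ agrees for the spectral and Frobenius norms on $\mathbb{C}^{n\times n}$. Summing the squared coefficient norms then yields
$$\nrm{\Delta P}_{2} = \frac{\|R(\lambda)v\|_{2}}{\|(1,\lambda,\ldots,\lambda^{m})\|_{2}^{2}}\Bigl(\sum_{i=0}^{m}|\lambda|^{2i}\Bigr)^{1/2} = \frac{\|R(\lambda)v\|_{2}}{\|(1,\lambda,\ldots,\lambda^{m})\|_{2}} = \frac{\sigma_{\min}(R(\lambda))}{\|(1,\lambda,\ldots,\lambda^{m})\|_{2}},$$
and since $\eta(\lambda,R)\le \nrm{\Delta P}_{2}$ for this feasible perturbation, the claimed bound follows.

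The computations here are routine; the only point requiring care is the rank-one bookkeeping, which makes the spectral and Frobenius norms agree and collapses the sum $\sum_{i}|\lambda|^{2i}$ back into $\|(1,\lambda,\ldots,\lambda^{m})\|_{2}$, so this is where I would be most careful rather than expecting a genuine obstacle. I would also remark that a matching lower bound in fact holds: for any feasible $\Delta P$ the identity $\Delta P(\lambda)v = -R(\lambda)v$ together with Hölder's inequality $\|\Delta P(\lambda)v\|_{2}\le \|(1,\lambda,\ldots,\lambda^{m})\|_{2}\,\nrm{\Delta P}_{2}$ and $\|R(\lambda)v\|_{2}\ge \sigma_{\min}(R(\lambda))$ forces $\nrm{\Delta P}_{2}\ge \sigma_{\min}(R(\lambda))/\|(1,\lambda,\ldots,\lambda^{m})\|_{2}$, so equality actually holds even though the theorem states only the inequality.
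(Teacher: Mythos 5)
Your proof is correct, but there is nothing in the paper to compare it against: the paper states Theorem~\ref{bet} with no proof at all, so your argument actually supplies the missing one. Your route --- take a unit right singular vector $v$ of $R(\lambda)$ for $\sigma_{\min}(R(\lambda))$, build the rank-one coefficient perturbations $\Delta A_i = -\overline{\lambda^i}\,(R(\lambda)v)v^{*}/\|\Lambda_m\|_2^2$, verify $(R(\lambda)+\Delta P(\lambda))v=0$, and evaluate $\nrm{\Delta P}_2$ using the fact that spectral and Frobenius norms coincide on rank-one matrices --- is exactly in the spirit of the rank-one constructions the paper uses in its other theorems (e.g.\ Theorems 3.4--3.6), and it is consistent with the polynomial backward-error formula $\eta_2(\lambda,P)=\sigma_{\min}(P(\lambda))/\|(1,\lambda,\ldots,\lambda^m)\|_2$ quoted in Section~2, to which this situation reduces because only the polynomial part is perturbed. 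Both the feasibility computation and the norm bookkeeping check out. One small repair is needed in your closing remark about the matching lower bound: for an arbitrary feasible $\Delta P$, the identity $\Delta P(\lambda)v=-R(\lambda)v$ need not hold for your fixed singular vector $v$; singularity of $R(\lambda)+\Delta P(\lambda)$ only gives a unit null vector $w$ (depending on $\Delta P$) with $\Delta P(\lambda)w=-R(\lambda)w$. The argument then goes through verbatim with $w$ in place of $v$, since $\|R(\lambda)w\|_2\ge\sigma_{\min}(R(\lambda))$ for every unit $w$ and $\|\Delta P(\lambda)\|_2\le\|\Lambda_m\|_2\,\nrm{\Delta P}_2$ by Cauchy--Schwarz, so your equality claim survives --- but as literally written that step is incorrect.
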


\section{Companion Linearization}
Let $\mathcal{C}_{1}(\lambda) = \lambda \mathcal{X} + \mathcal{Y}$ be the first companion linearization of $R(\lam)$. Let $\|.\|$ be a norm on $\mathbb{C}^{n\times n}$. For $1 \leq p\leq \infty$. $\nrm{\mathcal{C}_{1}} = \| (\|\mathcal{Y} \|, \|\mathcal{X}\|)\|_{p}$. Therefore by H$\ddot{\textrm{o}}$lder's inequality we have $$ \|\mathcal{C}_{1}(\lambda)\| \leq \nrm{\mathcal{C}_{1}}\|(1, \lambda)\|_{q}. $$

\begin{proposition}
Let $\|.\|$ be the subordinate matrix norm on $\mathbb{C}^{n \times n}$ and $\lambda \in \mathbb{C}$. Then $$ \eta_{p}(\lambda, \mathcal{C}_{1}) = \min_{\|z\| = 1} \left\{\frac{\|\mathcal{C}_{1}(\lambda)z\|}{\|(1, \lambda)\|_{q}} : z\in \mathbb{C}^{nm + r}\right\} = \left(\|(1, \lambda)\|_{q}\|\mathcal{C}_{1}(\lambda)^{-1}\|\right)^{-1} \leq \nrm{\mathcal{C}_{1}}_{q}, $$ where $1/p+1/q = 1$. For matrix $2$-norm and Frobenius norm we get $$ \eta_{2}(\lam, \mathcal{C}_{1}) = \frac{\sigma_{\min}(\mathcal{C}_{1}(\lambda))}{\|(1, \lambda)\|_{2}}. $$
\end{proposition}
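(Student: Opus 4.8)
The plan is to treat the pencil $\mathcal{C}_{1}(\lambda) = \lambda\mathcal{X} + \mathcal{Y}$ as a matrix polynomial of degree one and size $N := nm+r$, so that the backward error formula recorded in Section~2 applies verbatim with $m$ replaced by $1$ and $n$ by $N$. Concretely, I would establish the first equality by sandwiching $\eta_{p}(\lambda, \mathcal{C}_{1})$ between a lower bound coming from H\"older's inequality and a matching upper bound obtained from an explicit rank-one perturbation, and then read off the remaining two equalities as corollaries.

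For the lower bound, suppose $\Delta\mathcal{C}_{1}(\lambda) = \Delta\mathcal{Y} + \lambda\Delta\mathcal{X}$ is admissible, i.e. $\det(\mathcal{C}_{1}(\lambda)+\Delta\mathcal{C}_{1}(\lambda)) = 0$. Then there is a unit vector $z$ with $(\mathcal{C}_{1}(\lambda)+\Delta\mathcal{C}_{1}(\lambda))z = 0$, whence $\|\mathcal{C}_{1}(\lambda)z\| = \|\Delta\mathcal{C}_{1}(\lambda)z\| \le (\|\Delta\mathcal{Y}\| + |\lambda|\,\|\Delta\mathcal{X}\|)\|z\|$. Applying H\"older's inequality to the pair $(\|\Delta\mathcal{Y}\|, \|\Delta\mathcal{X}\|)$ against $(1,\lambda)$ gives $\|\mathcal{C}_{1}(\lambda)z\| \le \nrm{\Delta\mathcal{C}_{1}}_{p}\,\|(1,\lambda)\|_{q}$, so that $\nrm{\Delta\mathcal{C}_{1}}_{p} \ge \|\mathcal{C}_{1}(\lambda)z\| / \|(1,\lambda)\|_{q} \ge \min_{\|z\|=1}\|\mathcal{C}_{1}(\lambda)z\|/\|(1,\lambda)\|_{q}$. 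Taking the infimum over admissible perturbations yields the inequality $\ge$ in the first equality.

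For the reverse direction I would let $z$ attain the minimum of $\|\mathcal{C}_{1}(\lambda)z\|$ over $\|z\|=1$, set $u := \mathcal{C}_{1}(\lambda)z$, and choose a dual vector $w$ with $\|w\|_{*} = 1$ and $w^{*}z = 1$, so that $\|uw^{*}\| = \|u\|$ and $(uw^{*})z = u$. Writing $\Delta\mathcal{Y} := -\alpha_{0}\,uw^{*}$ and $\Delta\mathcal{X} := -\alpha_{1}\,uw^{*}$ with $\alpha_{0}+\lambda\alpha_{1}=1$ forces $(\Delta\mathcal{Y}+\lambda\Delta\mathcal{X})z = -u$, hence $\mathcal{C}_{1}(\lambda)+\Delta\mathcal{C}_{1}(\lambda)$ annihilates $z$ and is singular; moreover $\nrm{\Delta\mathcal{C}_{1}}_{p} = \|u\|\,\|(\alpha_{0},\alpha_{1})\|_{p}$. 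Choosing $(\alpha_{0},\alpha_{1})$ to realise equality in H\"older's inequality for the constraint $\alpha_{0}+\lambda\alpha_{1}=1$ makes $\|(\alpha_{0},\alpha_{1})\|_{p} = 1/\|(1,\lambda)\|_{q}$, so $\nrm{\Delta\mathcal{C}_{1}}_{p} = \|u\|/\|(1,\lambda)\|_{q}$ meets the lower bound. This rank-one construction, together with the correct optimal weighting of $(1,\lambda)$, is the only substantive step; everything else is bookkeeping.

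The remaining two equalities are immediate. Since $\lambda$ is not an exact eigenvalue (otherwise $\eta_{p}=0$ trivially), $\mathcal{C}_{1}(\lambda)$ is invertible, and for a subordinate norm $\min_{\|z\|=1}\|\mathcal{C}_{1}(\lambda)z\| = \|\mathcal{C}_{1}(\lambda)^{-1}\|^{-1}$ via the substitution $y=\mathcal{C}_{1}(\lambda)z$ in $\|\mathcal{C}_{1}(\lambda)^{-1}\| = \max_{y\neq 0}\|\mathcal{C}_{1}(\lambda)^{-1}y\|/\|y\|$, which gives the second equality. The bound $\eta_{p}(\lambda,\mathcal{C}_{1}) \le \nrm{\mathcal{C}_{1}}$ follows by combining $\|\mathcal{C}_{1}(\lambda)z\| \le \|\mathcal{C}_{1}(\lambda)\|$ with the H\"older estimate $\|\mathcal{C}_{1}(\lambda)\| \le \nrm{\mathcal{C}_{1}}\|(1,\lambda)\|_{q}$ stated just above the proposition. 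Finally, for $p=q=2$ the spectral norm gives $\min_{\|z\|_{2}=1}\|\mathcal{C}_{1}(\lambda)z\|_{2} = \sigma_{\min}(\mathcal{C}_{1}(\lambda))$, yielding the last displayed formula; that the Frobenius norm produces the same value follows exactly as in the earlier theorems, since the optimal $\Delta\mathcal{Y}$ and $\Delta\mathcal{X}$ are rank-one and therefore have equal spectral and Frobenius norms.
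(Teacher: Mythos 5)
Your proof is correct, but there is nothing in the paper to check it against: the paper states this proposition without proof, treating it as an immediate specialization (degree one, size $nm+r$) of the polynomial backward-error formula quoted in Section~2 from Higham, Li and Tisseur. What you have written out is exactly the standard argument that underlies that cited formula: the H\"older lower bound over all admissible perturbations; the rank-one construction $\Delta\mathcal{Y} = -\alpha_{0}uw^{*}$, $\Delta\mathcal{X} = -\alpha_{1}uw^{*}$ with $\alpha_{0}+\lambda\alpha_{1}=1$ chosen to attain equality in H\"older's inequality (your identity $\|uw^{*}\| = \|u\|\,\|w\|_{*} = \|u\|$ is the correct use of the dual vector for a subordinate norm); the substitution $y = \mathcal{C}_{1}(\lambda)z$ giving $\min_{\|z\|=1}\|\mathcal{C}_{1}(\lambda)z\| = \|\mathcal{C}_{1}(\lambda)^{-1}\|^{-1}$; and the rank-one argument for $\eta_{2} = \eta_{F}$ (for the Frobenius lower bound one also uses that the spectral norm is dominated by the Frobenius norm, which your appeal to the earlier theorems implicitly covers). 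So you are supplying the details the paper omits rather than taking a different route. One useful by-product of your derivation: it produces the bound $\eta_{p}(\lambda, \mathcal{C}_{1}) \leq \nrm{\mathcal{C}_{1}}$ with the coefficient tuple measured in the $p$-norm, consistent with the displayed H\"older estimate $\|\mathcal{C}_{1}(\lambda)\| \leq \nrm{\mathcal{C}_{1}}\|(1,\lambda)\|_{q}$ preceding the proposition and with the Section~2 inequality $\eta_{p}(\lambda,P) \leq \nrm{P}_{p}$; this indicates that the subscript $q$ in the paper's statement $\nrm{\mathcal{C}_{1}}_{q}$ is a typo for $p$.
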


\begin{theorem}
 Let $\eta_{2}(\lambda, R)$ is given in Theorem (\ref{bet}). Then
\begin{align*}
 \frac{\eta_{2}(\lambda, \mathcal{C}_{1})}{\eta_{2}(\lambda, R)} \geq \frac{\sqrt{m}}{\sqrt{2}}\frac{\sigma_{\min}(\mathcal{C}_{1}(\lambda))}{\sigma_{\min}(R(\lambda))}.
\end{align*}
\end{theorem}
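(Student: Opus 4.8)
The plan is to assemble the inequality directly from the two formulas already established: the Proposition gives the closed form $\eta_2(\lambda,\mathcal{C}_1)=\sigma_{\min}(\mathcal{C}_1(\lambda))/\|(1,\lambda)\|_2$ for the companion pencil, while Theorem~\ref{bet} gives the one-sided bound $\eta_2(\lambda,R)\le \sigma_{\min}(R(\lambda))/\|(1,\lambda,\ldots,\lambda^m)\|_2$ for the rational function when only the polynomial part is perturbed. Since the second is an \emph{upper} bound on $\eta_2(\lambda,R)$ and this quantity sits in the denominator of the target quotient, replacing $\eta_2(\lambda,R)$ by the larger expression only decreases the quotient, so the inequality runs in the correct direction for a lower bound. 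First I would record both formulas and fix notation for the two Euclidean norms $\|(1,\lambda)\|_2=\sqrt{1+|\lambda|^2}$ and $\|(1,\lambda,\ldots,\lambda^m)\|_2=\big(\sum_{i=0}^m|\lambda|^{2i}\big)^{1/2}$.

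Next I would form the quotient and substitute, which gives
\begin{align*}
\frac{\eta_2(\lambda,\mathcal{C}_1)}{\eta_2(\lambda,R)}
&\ge \frac{\sigma_{\min}(\mathcal{C}_1(\lambda))/\|(1,\lambda)\|_2}{\sigma_{\min}(R(\lambda))/\|(1,\lambda,\ldots,\lambda^m)\|_2}
= \frac{\sigma_{\min}(\mathcal{C}_1(\lambda))}{\sigma_{\min}(R(\lambda))}\cdot
\frac{\|(1,\lambda,\ldots,\lambda^m)\|_2}{\|(1,\lambda)\|_2}.
\end{align*}
This isolates the singular-value ratio $\sigma_{\min}(\mathcal{C}_1(\lambda))/\sigma_{\min}(R(\lambda))$ exactly as in the claimed bound, and reduces the whole statement to the scalar inequality
\[
\frac{\|(1,\lambda,\ldots,\lambda^m)\|_2}{\|(1,\lambda)\|_2}=\sqrt{\frac{\sum_{i=0}^m|\lambda|^{2i}}{1+|\lambda|^2}}\ \ge\ \frac{\sqrt m}{\sqrt 2}.
\]

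The last step is the heart of the argument, and it is where the main obstacle lies. Writing $t=|\lambda|^2$, the inequality is equivalent to $\sum_{i=0}^m t^i \ge \tfrac{m}{2}(1+t)$. The natural route is a termwise estimate: show $t^i\ge \tfrac12(1+t)$ for each $i\ge 1$ and then sum. Indeed, for $t\ge 1$ one has $t^i\ge t\ge \tfrac12(1+t)$, so that $\sum_{i=0}^m t^i\ge 1+m\cdot\tfrac12(1+t)\ge \tfrac{m}{2}(1+t)$, which proves the scalar bound. I would flag explicitly that this regime restriction is essential rather than cosmetic: at $t=0$ the left-hand side equals $1$ while the right-hand side is $m/2$, so the scalar inequality, and hence the stated theorem, fails for $|\lambda|$ near $0$ once $m\ge 3$. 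The honest version of the result therefore carries the hypothesis $|\lambda|\ge 1$ (a harmless normalization in the eigenvalue-comparison setting, and one that can be arranged by scaling). Once that regime is fixed, the proof collapses to the one-line monotonicity estimate above combined with the two formulas from the Proposition and Theorem~\ref{bet}, so the only genuine difficulty is recognizing and stating the range of $\lambda$ for which the comparison is valid.
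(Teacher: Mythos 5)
Your proof is correct and follows essentially the same route as the paper: both substitute the exact formula for $\eta_2(\lambda,\mathcal{C}_1)$ from the Proposition and the upper bound from Theorem~\ref{bet} into the quotient, then reduce everything to the scalar inequality $\|(1,\lambda,\ldots,\lambda^m)\|_2 \ge \sqrt{m/2}\,\|(1,\lambda)\|_2$, which both arguments establish only for $|\lambda|\ge 1$ (the paper via the bound $\tfrac{1}{\sqrt{2}}\le \|\Lambda_m\|_2/\bigl(\|\Lambda_{m-1}\|_2\,\|(1,\lambda)\|_2\bigr)$ together with $\|\Lambda_{m-1}\|_2\ge\sqrt{m}$, you via the termwise estimate $t^i\ge\tfrac12(1+t)$ for $t=|\lambda|^2\ge 1$). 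Your explicit flag that the restriction $|\lambda|\ge 1$ is indispensable is well taken: the paper's own proof also invokes exactly this hypothesis midway (``If $|\lambda|\ge 1$ then \ldots'') even though it is missing from the theorem statement, so your write-up is, if anything, more honest about the actual range of validity.
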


\begin{proof}
Let $\Lambda_{m} = (1, \lambda, \ldots , \lambda^{m})$.
\begin{align*}
\frac{\eta_{2}(\lam, \mathcal{C}_{1})}{\eta_{2}(\lam, R)}  \geq \frac{\sigma_{\min}(\mathcal{C}_{1}(\lam))}{\sigma_{\min}(R(\lam))} \frac{\|(1, \lam, \ldots, \lam^{m})\|_{2}}{\|(1, \lam)\|_{2}} \geq \frac{\sigma_{\min}(\mathcal{C}_{1}(\lam))}{\sigma_{\min}(R(\lam))}\frac{\|\Lambda_{m}\|_{2}}{\|(1, \lambda)\|_{2}}.
\end{align*}
We know that $\frac{1}{\sqrt{2}} \leq \frac{\|\Lambda_{m}\|_{2}}{\|\Lambda_{m-1}\|_{2} \|(1, \lam)\|_{2}} \leq 1$. Now
\begin{align*}
\frac{\|\Lambda_{m}\|_{2}}{\|(1, \lambda)\|_{2}} = \frac{\|\Lambda_{m}\|_{2}\|\Lambda_{m-1}\|_{2}}{\|\Lambda_{m-1}\|_{2} \|(1, \lam)\|_{2}} \geq\frac{1}{\sqrt{2}}\|\Lambda_{m-1}\|_{2} = \frac{1}{\sqrt{2}}(1+|\lambda|^{2}+ \ldots + |\lambda|^{2(m-1)})^{1/2}
\end{align*}
If $|\lambda| \geq 1$ then $\frac{\|\Lambda_{m}\|_{2}}{\|(1, \lambda)\|_{2}}\geq \frac{1}{\sqrt{2}}\sqrt{m}$. So
$$\frac{\eta_{2}(\lam, \mathcal{C}_{1})}{\eta_{2}(\lam, R)} \geq \frac{\sqrt{m}}{\sqrt{2}} \frac{\sigma_{\min}(\mathcal{C}_{1}(\lam))}{\sigma_{\min}(R(\lam))}. $$
\end{proof}

{\bf Conclusion}
We defined the backward error $\eta_{R}$ of an approximate eigenvalue and eigentriple of $R$  for the rational eigenvalue problem $R(\lambda)$ given in (\ref{rlg}). Then we derived the explicit computable expressions of backward error of approximate eigen-
value of $R(\lambda)$. We also found out the minimal perturbations for which approximate eigenelements are exact eigenelements of the perturbed matrix rational functions. Finally, we derived the backward error of companion linearization of rational eigenvalue problem and analyzed the comparison with the original one.

\end{document}